\newcommand{\Z}{{\mathbb Z}}
\newcommand{\F}{{\mathbb F}}
\renewcommand{\dim}{\operatorname{dim}\nolimits}
\renewcommand{\ker}{\operatorname{Ker}\nolimits}
\newcommand{\im}{\operatorname{Im}\nolimits}
\newtheorem{theorem}{Theorem}[section]
\newtheorem{proposition}[theorem]{Proposition}
\newtheorem{corollary}[theorem]{Corollary}
\newtheorem{lemma}[theorem]{Lemma}
\newtheorem{remark}[theorem]{Remark}
\newtheorem{example}[theorem]{Example}
\title[Spaces with Noetherian cohomology]{Spaces with Noetherian cohomology}
\author[K.K.S. Andersen]{Kasper K. S. Andersen}
\author[N. Castellana]{Nat\`{a}lia Castellana}
\author[V. Franjou]{Vincent Franjou}
\author[A. Jeanneret]{Alain Jeanneret}
\author[J. Scherer]{J\'er\^{o}me Scherer}
\thanks{The first author was supported by the Danish National Research Foundation 
through the Centre for Symmetry and Deformation, the second and fifth by the FEDER/MEC grant MTM2010-20692,
the third by the LMJL - Laboratoire de Math\'ematiques Jean-Leray UMR 6629 CNRS/Universit\'e de Nantes,
by the ANR grant HGRT BLAN08-2-338236, and by CRM Barcelona, and the fifth by the IHES and the University of Bern.}
\subjclass[2000]{Primary 55U20; Secondary 13E05, 18G15, 55N25, 55T10, 55R12, 55R35}
\begin{document}

\begin{abstract}
Is the cohomology of the classifying space of a $p$-compact group,
with Noetherian twisted coefficients, a Noetherian module? This note
provides, over the ring of $p$-adic integers, such a generalization
to $p$-compact groups of the Evens-Venkov Theorem. We consider the
cohomology of a space with coefficients in a module, and we compare
Noetherianity over the field with $p$ elements, with Noetherianity
over the $p$-adic integers, in the case when the fundamental group
is a finite $p$-group.
\end{abstract}
\maketitle
\section*{Introduction}
\label{sec intro}
The main theorem of Dwyer and Wilkerson in \cite{MR1274096} states
that the mod~$p$ cohomology of the classifying space of a
$p$-compact group is a finitely generated algebra. This generalizes
to $p$-compact groups the Evens-Venkov Theorem \cite{MR0137742} on
the cohomology of a finite group $G$. There are however two main
differences between these two results. Evens' statements allow a
general base ring --- any Noetherian ring is allowed, and they
include the case of general twisted coefficients (contrary to the
early work by Golod, \cite{MR0104720}, or Venkov, \cite{MR0108788})
as follows: if $M$ is Noetherian over a ring $R$, then so is
$H^\ast(G;M)$ over $H^\ast(G;R)$. Beautiful finite generation
statements on cohomology have since been proved in numerous
situations. For statements as general as Evens' however, proofs have
been surprisingly elusive.
\par
This note is concerned with these generalizations for $p$-compact
groups and $p$-local finite groups, as defined by Broto, Levi, and Oliver,
\cite{MR1992826}. We ask more generally when
Noetherianity of the mod $p$
cohomology algebra $H^*(Y;\F_p )$ of a space $Y$ implies that the
cohomology with coefficients in a $R[\pi_1(Y)]$-module $M$, $H^*(Y;
M)$, is a Noetherian module over the algebra $H^*(Y; R)$. Because
the classifying space $BX$ of a $p$-compact group is $p$-complete by
definition, we work over $p$-complete
rings (for example $H^*((BS^3)^\wedge_p; \Z)$ is not Noetherian).
\par\medskip
\noindent{\bf Theorem~\ref{thm:padicnoetherian}.}
\emph{
Let $Y$ be a connected space with finite fundamental group. Then, the graded $\Z^\wedge_p$-algebra $H^*(Y; \Z^\wedge_p)$
is Noetherian if, and only if, the graded $\F_p$-algebra $H^*(Y;\F_p)$ is Noetherian and the torsion
in $H^*(Y; \Z^\wedge_p)$ is bounded.
}
\par\medskip
\noindent{\bf Theorem~\ref{thm:main}.}
\emph{
    Let $Y$ be a connected space such that $\pi_1 Y$ is a finite $p$-group.
    Let $M$ be a $\Z^\wedge_p [\pi_1 Y]$-module,
    which is finitely generated over $\Z^\wedge_p$.
If the graded $\Z^\wedge_p$-algebra $H^*(Y; \Z^\wedge_p)$ is Noetherian, then
$H^*(Y; M)$ is Noetherian as a module over $H^*(Y; \Z^\wedge_p)$.
}
\par\medskip
This applies to $p$-compact group and to $p$-local finite groups
to show that their $p$-adic cohomology algebra is Noetherian,
see Theorem~\ref{thm:padicpcompact} and~\ref{thm:padicplfg}.
Note that our proof makes no use of the recent classification of
$p$-compact groups by Andersen, Grodal, M{\o}ller, and
Viruel, \cite{MR2476779}, \cite{MR2373153}, \cite{MR2338539}.
Even in the case of a compact Lie group $G$, our theorem provides a general
finiteness theorem for the cohomology of $BG$ with twisted coefficients.
One of the few explicit computations available in the literature is the case
of $O(n)$, due to \v{C}adek \cite{MR1709293}, (see also Greenblatt 
\cite{MR2246023}).
\par\medskip
\noindent {\bf Acknowledgements.} This work started when the third
author visited the CRM in Barcelona during the emphasis semester on
higher categories in 2008 and was continued while the fifth author
was visiting the IHES and the University of Bern in 2009. We would
like to thank these institutions for their generous hospitality. We would like to
thank Ran Levi for finding an extra author.

\section{The cohomology as a graded module}
\label{sec:gradedmodule}
Before considering the mod $p$ or $p$-adic cohomology as an algebra, we
first make explicit the relationship between two standard milder finiteness
assumptions. When the graded vector space $H^*(Y; \F_p)$ is of finite type,
i.e. $H^n(Y; \F_p)$ is a finite dimensional vector space in each degree $n$,
is $H^n(Y; \Z^\wedge_p)$ a finitely generated $\Z^\wedge_p$-module in each degree $n$ as well?
This is clearly a necessary condition for the cohomology algebra to be finitely generated.
We show that it holds when $\pi_1(Y)$ is finite.
\par
The main tool to relate the mod $p$ and the $p$-adic cohomology is the universal coefficient exact sequence
--- see for example \cite[Theorem~5.5.10]{MR666554} for spaces and \cite[Part III, Proposition 6.6]{Adams} for spectra:
\begin{equation}
\label{universal}
0 \rightarrow H^*(Y; \Z^\wedge_p) \otimes\F_p \xrightarrow{\rho} H^*(Y; \F_p)
\xrightarrow{\partial} \hbox{\rm Tor} (H^{*+1}(Y; \Z^\wedge_p); \Z/p) \rightarrow 0
\end{equation}
which applies, since $ \Z^\wedge_p$ is a PID  and $\Z/p$ is a finitely generated $\Z^\wedge_p$-module. 
\begin{remark}
\label{universal coefficients}
{\rm
The morphism $\rho$ in \eqref{universal} is a ring homomorphism
which makes the middle term $H^*(Y;\F_p)$ an $H^*(Y; \Z^\wedge_p) \otimes \F_p$-module.
Evens observed in \cite[p. 272]{MR0430024} that $\partial$ is also a homomorphism of
$H^*(Y;\Z^\wedge_p) \otimes \F_p$-modules,
where $\hbox{\rm Tor} (H^*(Y; \Z^\wedge_p); \Z/p)$ has the natural
module structure he introduced  in \cite[Lemma~2]{MR0430024}.}
\end{remark}
%
\begin{lemma}
\label{lem:finiteinvariants}
Let $G$ be a finite $p$-group, $K$ a field of characteristic $p$ and $V$ a $KG$-module. If $V^G$ is 
a finite dimensional $K$-vector space, then so is $V$. 
\end{lemma}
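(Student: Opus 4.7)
The plan is to exploit the fact that for $G$ a finite $p$-group acting on a $K$-vector space in characteristic $p$, the augmentation ideal $I \subset KG$ is nilpotent, so every $v\in V$ is killed by some power $I^n$. One can thus filter $V$ by the ascending chain of submodules
\[
V_i = \{v\in V : I^i v = 0\}, \qquad i\ge 0,
\]
for which $V_0=0$, $V_1=V^G$, and $V_N=V$ as soon as $N$ reaches the nilpotency index of $I$. The aim is to show inductively that every quotient $V_{i+1}/V_i$ is finite-dimensional; since the filtration has only finitely many steps, this forces $V$ itself to be finite-dimensional.

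Next I would construct, for each $i\ge 0$, an injection
\[
V_{i+1}/V_i \hookrightarrow \operatorname{Hom}_K\bigl(I,\,V_i/V_{i-1}\bigr)
\]
by sending the class of $v$ to the map $x\mapsto \overline{xv}$. This is well-defined because $v\in V_{i+1}$ forces $Iv\subseteq V_i$, and it is $K$-linear since left multiplication by $x$ is. Its kernel consists of those $v$ with $Iv\subseteq V_{i-1}$, equivalently $I^i v=0$, which is exactly $V_i$. Since $I$ is a finite-dimensional $K$-vector space (of dimension $|G|-1$), an induction on $i$ starting from $\dim_K V_1 = \dim_K V^G < \infty$ bounds each $\dim_K V_{i+1}/V_i$, yielding $\dim_K V<\infty$.

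There is no genuine obstacle here: the argument is driven by two elementary facts, namely that $I$ is nilpotent and that $I$ is finite-dimensional, and the only creative step is choosing the right filtration on $V$. A sharper bound $\dim_K V\le |G|\cdot\dim_K V^G$ can be obtained by applying $\operatorname{Hom}_{KG}(-,V)$ to a composition series of $KG$ whose simple factors are all the trivial module $K$. Alternatively, one can induct on $|G|$: for a central subgroup $H$ of order $p$, the identity $(V^H)^{G/H}=V^G$ reduces to the case $G=\Z/p$, where $N:=g-1$ is nilpotent with $N^p=0$, and the chain of injections $\ker N^{i+1}/\ker N^i \hookrightarrow \ker N^i/\ker N^{i-1}$ induced by $N$ directly gives $\dim_K V\le p\,\dim_K V^G$.
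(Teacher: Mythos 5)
Your proof is correct but takes a genuinely different route from the paper's. The paper sets $n=\dim_K V^G$, forms the free module $F=(KG)^n$, notes that $\dim_K F^G=n$ so there is a $KG$-isomorphism $\alpha\colon V^G\to F^G$, and uses that $F$ is an \emph{injective} $KG$-module (since $KG$ is a self-injective algebra) to extend $\alpha$ to $\alpha'\colon V\to F$; because over a finite $p$-group in characteristic $p$ a nonzero module always has nonzero invariants, $(\ker\alpha')^G=\ker\alpha=0$ forces $\ker\alpha'=0$, and $V$ embeds in the finite-dimensional $F$. You instead filter $V$ by the annihilators $V_i=\ker(I^i\cdot)$ of powers of the augmentation ideal $I$, use nilpotence of $I$ to see the filtration terminates, and bound each successive quotient by the injection $V_{i+1}/V_i\hookrightarrow\operatorname{Hom}_K(I,V_i/V_{i-1})$. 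Both arguments rest on the same structural fact about $p$-groups in characteristic $p$ (the trivial module is the unique simple, equivalently $I$ is nilpotent, equivalently nonzero modules have nonzero invariants), but the paper packages it as an embedding into an injective, while you package it as a Loewy-type filtration. Your route is somewhat more elementary, as it avoids invoking self-injectivity of $KG$; the paper's is shorter once injectivity of $F$ is granted, and it immediately gives the bound $\dim_K V\le\dim_K F=|G|\cdot\dim_K V^G$, which you also recover through your alternative arguments (the composition series of $KG$, or the reduction to a central $\Z/p$), whereas the raw filtration argument only gives a cruder geometric bound in $|G|-1$.
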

\begin{proof}
Let $n=\dim_K V^G$ and let $F=(KG)^n$ be a free $KG$-module of
rank $n$. Note that $\dim_K F^G= n$, so there is an isomorphism of
$KG$-modules $\alpha \colon V^G\rightarrow F^G$. Since $F$ is an
injective $KG$-module, $\alpha$ extends to a homomorphism $\alpha'
\colon V \rightarrow F$ of $KG$-modules, which we now prove is injective. 
Clearly $(\ker\alpha')^G= \ker\alpha'\cap V^G=\ker\alpha=0$. Since $G$ is a 
finite $p$-group, it follows that $\ker\alpha'= 0$.
Hence $V$ embeds in $F$, so $V$ is finite dimensional.
\end{proof}
%

\begin{proposition}
\label{padicfinitetype}
Let $Y$ be a connected space with finite fundamental group. The group
$H^n(Y;\F_p)$ is finite for every positive integer $n$ if and only if
the $\Z^\wedge_p$-module $H^n(Y;\Z^\wedge_p)$ is finitely generated for every $n$.
Under this condition, the $\Z^\wedge_p$-module $H^n(Y; M)$ is finitely generated
for any $n$ and every $\Z^\wedge_p[\pi_1 Y]$-module $M$ which is finitely generated over $\Z^\wedge_p$.
\end{proposition}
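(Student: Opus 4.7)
My plan is to establish the equivalence via a Milnor inverse limit argument realising $H^*(Y;\Z^\wedge_p)$ as a profinite $\Z^\wedge_p$-module, to which topological Nakayama applies; and then to handle twisted coefficients using the Serre spectral sequence of the universal cover together with Lemma~\ref{lem:finiteinvariants}. The easy implication of the equivalence is immediate from \eqref{universal}: if $H^n(Y;\Z^\wedge_p)$ is finitely generated for every $n$, the outer terms $H^n(Y;\Z^\wedge_p)\otimes\F_p$ and $H^{n+1}(Y;\Z^\wedge_p)[p]$ are finite, and $H^n(Y;\F_p)$ is squeezed between them.

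For the converse, I would first show by induction on $k$, using the short exact sequence $0\to\F_p\to\Z/p^{k+1}\to\Z/p^k\to 0$ together with its long exact cohomology sequence, that $H^n(Y;\Z/p^k)$ is a finite abelian group for every $k$ and $n$. The tower of cochain complexes $\{C^*(Y;\Z/p^k)\}$ has surjective transition maps, so the resulting Milnor sequence gives
\[
H^n(Y;\Z^\wedge_p) \cong \lim_k H^n(Y;\Z/p^k),
\]
the $\lim^1$-term vanishing as it is the derived limit of a tower of finite groups. This realises $H^n(Y;\Z^\wedge_p)$ as a profinite, hence $p$-adically complete and Hausdorff, $\Z^\wedge_p$-module. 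The Bockstein exact sequence attached to $0\to\Z^\wedge_p\xrightarrow{p}\Z^\wedge_p\to\F_p\to 0$ exhibits $H^n(Y;\Z^\wedge_p)/p$ as a subgroup of the finite group $H^n(Y;\F_p)$, and topological Nakayama then forces $H^n(Y;\Z^\wedge_p)$ to be finitely generated over $\Z^\wedge_p$.

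For the module statement, rerunning the same Milnor-and-Nakayama strategy with $M$-coefficients reduces the claim to showing that $H^n(Y;N)$ is finite for every finite $\pi_1 Y$-module $N$, since the successive quotients $p^iM/p^{i+1}M$ are of this type. A dévissage along a composition series of $N$, combined with a Sylow transfer reduction (using that the transfer exhibits $H^n(Y;-)$ as a $\Z^\wedge_p$-summand of $H^n(Y_S;-|_S)$ for $Y_S$ the cover corresponding to a Sylow $p$-subgroup of $\pi_1 Y$), brings the problem to $\pi_1 Y$ a finite $p$-group and $N=\F_p$. In this situation I would use the Serre spectral sequence
\[
E_2^{p,q} = H^p(\pi_1 Y;H^q(\tilde Y;\F_p)) \Longrightarrow H^{p+q}(Y;\F_p)
\]
of the fibration $\tilde Y\to Y\to B\pi_1 Y$, and argue by induction on $q$ that $H^q(\tilde Y;\F_p)$ is finite.

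The crux of the proof lies in this induction, where Lemma~\ref{lem:finiteinvariants} is the indispensable tool. Suppose $H^{q'}(\tilde Y;\F_p)$ is finite for every $q'<q$. Then each $E_2^{r,q-r+1}$ with $r\geq 2$ is the cohomology of a finite $p$-group with finite coefficients, hence finite, so every successive quotient $E_r^{0,q}/E_{r+1}^{0,q}$, which injects into $E_r^{r,q-r+1}$, is finite. Since only finitely many differentials out of the vertical axis can be nonzero, $E_2^{0,q}/E_\infty^{0,q}$ is finite; combined with the fact that $E_\infty^{0,q}$ is a quotient of the finite group $H^q(Y;\F_p)$, this shows $E_2^{0,q}=H^q(\tilde Y;\F_p)^{\pi_1 Y}$ is finite. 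Lemma~\ref{lem:finiteinvariants} upgrades this to finiteness of the whole $\F_p[\pi_1 Y]$-module $H^q(\tilde Y;\F_p)$, completing the induction. Once $H^q(\tilde Y;\F_p)$ is finite for every $q$, rerunning the spectral sequence with coefficients in a finitely generated $\Z^\wedge_p[\pi_1 Y]$-module yields an $E_2$-page that is finitely generated over $\Z^\wedge_p$ in each bidegree (via the universal coefficient theorem on the simply-connected $\tilde Y$ and the finiteness of $\pi_1 Y$), giving the desired finite generation of $H^n(Y;M)$.
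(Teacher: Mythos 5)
Your Milnor-sequence-plus-topological-Nakayama argument for the equivalence is a clean alternative to the paper's proof, and it is in fact more elementary and more general: it makes no use of the universal cover, of $p$-completion, of Lemma~\ref{lem:finiteinvariants}, or even of the finiteness of $\pi_1 Y$. The paper instead reduces to $p$-complete $Y$ (so that $\pi_1 Y$ is automatically a finite $p$-group), cites \cite[Proposition 5.7]{ABN} for the simply-connected universal cover, runs the same Serre spectral sequence induction with Lemma~\ref{lem:finiteinvariants} to get $H^q(\tilde Y;\F_p)$ finite, and then descends via Evens--Venkov; your route gets the same conclusion directly from $H^n(Y;\Z^\wedge_p)\cong\lim_k H^n(Y;\Z/p^k)$ together with the finiteness of $H^n(Y;\Z^\wedge_p)/p$ (which embeds in $H^n(Y;\F_p)$ via the Bockstein sequence).

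The module statement, however, has a genuine gap. Your Sylow-transfer reduction to a finite $p$-group fundamental group is circular. The transfer does exhibit $H^n(Y;N)$ as a retract of $H^n(Y_S;N|_S)$, but to make that useful you need to know $H^n(Y_S;-)$ is finite, and by Shapiro's lemma $H^n(Y_S;\F_p)\cong H^n(Y;\F_p[\pi_1 Y/S])$ is itself a special case of the statement you are trying to prove. Nor can the spectral sequence induction be run directly over the original $\pi_1 Y$: Lemma~\ref{lem:finiteinvariants} genuinely requires a $p$-group (when $p\nmid|G|$, a $KG$-module can be infinite-dimensional with vanishing invariants), so the step passing from $E_2^{0,q}=H^q(\tilde Y;\F_p)^{\pi_1 Y}$ being finite to $H^q(\tilde Y;\F_p)$ being finite fails. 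The paper's way around this is to replace $Y$ by $Y^\wedge_p$, whose fundamental group is automatically a finite $p$-group, noting that mod $p$ and $p$-adic cohomology are unchanged under this replacement; you should make the same reduction, or at least record the $p$-group hypothesis as the one under which your inductive argument operates. (In all applications in the paper, $\pi_1 Y$ is already a finite $p$-group.)

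A smaller point: in ``rerunning Milnor--Nakayama with $M$-coefficients,'' the key embedding $H^n(Y;M)/p\hookrightarrow H^n(Y;M/pM)$ comes from the short exact sequence $0\to M\xrightarrow{p}M\to M/pM\to 0$, which requires $M$ to be $p$-torsion-free. For a general finitely generated $\Z^\wedge_p[\pi_1 Y]$-module $M$ you should first split off the (finite) torsion submodule via $0\to TM\to M\to M/TM\to 0$ and use Lemma~\ref{lem:shortexact}-style d\'evissage.
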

\begin{proof}
If $H^n(Y; \Z^\wedge_p)$ is a finitely generated $\Z^\wedge_p$-module for any $n$,
the universal coefficient exact sequence \eqref{universal} implies that $H^n(Y;\F_p)$ is
finite for any $n$.
\par
Conversely, assume that $H^n(Y;\F_p)$ is finite for every $n$. Since
the fundamental group of $Y$ is finite, the space $Y$ is $p$-good by
\cite[Proposition~VII.5.1]{MR51:1825} and therefore
$H^n(Y^\wedge_p;\F_p)\cong H^n(Y;\F_p)$. Likewise, since cohomology
with $p$-adic coefficients is represented by Eilenberg-MacLane
spaces $K(\Z^\wedge_p, n)$, which are $p$-complete,
$H^n(Y^\wedge_p;\Z^\wedge_p)\cong H^n(Y;\Z^\wedge_p)$, \cite[Proposition~II.2.8]{MR51:1825}.
We may therefore assume that $Y$ is $p$-complete and
that $G=\pi_1 Y$ is a finite $p$-group, see
\cite[Proposition 11.14]{MR1274096} or \cite[Section~5]{MR1062866}.
\par
If $Y$ is $1$-connected, then \cite[Proposition 5.7]{ABN} applies
and $H^n(Y;\Z^\wedge_p)$ is a finitely generated $\Z^\wedge_p$-module for every $n$.
For the general situation, let us
consider the universal cover fibration for $Y$, $\tilde Y \rightarrow Y
\rightarrow BG$. We prove by
induction that $H^n(\tilde Y; \F_p)$ is finite dimensional for any $n$.
The induction starts with the trivial case $n= 0$.
Assume thus that $H^m(\tilde Y; \F_p)$ is finite for all $m< n$.
Then, in the second page of the Serre spectral sequence in mod $p$ cohomology,
all groups $E_2^{i, j} = H^i(BG,H^j(\tilde Y; \F_p))$ on the lines $j=0, \dots, n-1$ are finite.
As $E_\infty^{0, n}$ is finite as well, it follows
that $E_2^{0, n} = H^n(\tilde Y; \F_p)^G$ is finite dimensional.
Since $G$ is a finite $p$-group,  finiteness of
$H^n(\tilde Y; \F_p)^G$ implies finiteness of $H^n(\tilde Y; \F_p)$ by Lemma \ref{lem:finiteinvariants}.
\par
We can now apply the 1-connected case to conclude that
$H^n(\tilde Y; \Z^\wedge_p)$ is a finitely generated $\Z^\wedge_p$-module for any $n$.
The Evens-Venkov Theorem
\cite[Theorem 8.1]{MR0137742} now shows that the $E_2$-term of the
Serre spectral sequence with $p$-adic coefficients consists of
finitely generated $\Z^\wedge_p$-modules. Thus so must be
$H^n(Y; \Z^\wedge_p)$ for any $n$.
\par
The second part of the assertion now follows easily.
The first part of the proposition and the universal coefficient
formula imply that $H^n(\tilde Y; M)$ is a finitely generated
$\Z^\wedge_p$-module for every~$n$.
We then use the Serre spectral sequence for cohomology with twisted
coefficients. The only reference we know is
\cite[Theorem~3.2]{MR1123662} where it is done equivariantly; we
need the case of the trivial group action.
\end{proof}

\section{Cohomology with Trivial coefficients}
\label{sec:trivial}
We now turn to finite generation of the cohomology algebras
$H^*(Y;\Z^\wedge_p )$ and $H^*(Y;\F_p )$, where trivial
coefficients are understood. This section should thus be no more than a warm up,
because it seems enough to gain some control on torsion to draw conclusion
from the universal coefficient theorem.
\par
Let $R$ be either the ring $ \Z^\wedge_p$, or the field $ \F_p$,
and note that both are Noetherian rings. The cohomology $H^*(Y;R )$
of any connected space is a commutative graded algebra, which is a
Noetherian $R$-algebra if and only if it is finitely generated as an
$R$-algebra \cite[Theorem 13.1]{MR1011461}.
\begin{lemma}
\label{lem:finitetypemodp}
Let $Y$ be a connected space.
If the $\Z^\wedge_p$-algebra $H^*(Y; \Z^\wedge_p)$ is Noetherian,
then $H^*(Y; \F_p)$ is a finitely generated module over the algebra
$H^*(Y; \Z^\wedge_p)\otimes \F_p$.
\end{lemma}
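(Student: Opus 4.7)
The plan is to exploit the universal coefficient short exact sequence \eqref{universal} together with the module-structure observation recorded in Remark~\ref{universal coefficients}, which ensures that the entire sequence is one of $H^*(Y;\Z^\wedge_p)\otimes\F_p$-modules. Since $H^*(Y;\Z^\wedge_p)$ is a Noetherian commutative graded algebra by hypothesis, its reduction $A := H^*(Y;\Z^\wedge_p)\otimes\F_p$, being a quotient by the ideal generated by $p$, is again a Noetherian commutative graded $\F_p$-algebra. The strategy is then to show that both the left-hand and the right-hand terms in \eqref{universal} are finitely generated as $A$-modules, from which finite generation of the middle term $H^*(Y;\F_p)$ follows at once.

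For the left-hand term, there is nothing to do: $H^*(Y;\Z^\wedge_p)\otimes\F_p$ is a cyclic module over itself. For the right-hand term, I would identify $\tor(H^{*+1}(Y;\Z^\wedge_p),\Z/p)$ with the graded $p$-torsion submodule $H^{*+1}(Y;\Z^\wedge_p)[p]$ of $H^*(Y;\Z^\wedge_p)$. As a submodule of the Noetherian module $H^*(Y;\Z^\wedge_p)$ over itself, it is finitely generated over $H^*(Y;\Z^\wedge_p)$; since it is annihilated by $p$, the module structure factors through $A$ and the same generators make it finitely generated over $A$. It is at this point that Remark~\ref{universal coefficients} is crucial: one must know that Evens' $H^*(Y;\Z^\wedge_p)$-module structure on $\tor$ agrees with the one inherited via $\partial$ from the action on $H^*(Y;\F_p)$, otherwise the final extension argument would not be legitimate.

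With both outer terms of \eqref{universal} finitely generated over the Noetherian ring $A$, the middle term is sandwiched in a short exact sequence of $A$-modules whose ends are finitely generated; standard Noetherian module theory then yields that $H^*(Y;\F_p)$ is finitely generated over $A$, as required. The only point that requires genuine care, rather than a direct citation, is the compatibility of module structures across \eqref{universal}, but this is precisely what has been recorded just before the lemma and can be invoked without further argument.
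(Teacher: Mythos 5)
Your proof is correct and follows essentially the same approach as the paper: identify the $\operatorname{Tor}$ term of the universal coefficient sequence with the $p$-torsion ideal, use Noetherianity to conclude it is a finitely generated ideal (hence a finitely generated $H^*(Y;\Z^\wedge_p)\otimes\F_p$-module since it is killed by $p$), and invoke Remark~\ref{universal coefficients} to get that the sequence is one of $H^*(Y;\Z^\wedge_p)\otimes\F_p$-modules, so the middle term is finitely generated. The paper's proof is more terse but uses exactly the same ingredients.
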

\begin{proof}
The ideal $\hbox{\rm Tor} (H^*(Y; \Z^\wedge_p); \Z/p)$ of elements annihilated by $p$
is a finitely generated ideal of $H^*(Y; \Z^\wedge_p)$ by assumption. It is therefore also finitely generated as an
$H^*(Y; \Z^\wedge_p)\otimes \F_p$-module.
The conclusion follows from Remark~\ref{universal coefficients} on the universal coefficient exact sequence.
\end{proof}

To be able to compare Noetherianity of the mod $p$ and the $p$-adic cohomology,
we need to analyze the $p$-torsion in $H^*(Y; \Z^\wedge_p)$.
Let us denote by $\text{T}_pH^*(Y; \Z^\wedge_p)$ the graded submodule of $p$-torsion elements.
The key assumption in the main theorem of this section is that the order of the $p$-torsion is bounded.
This implies that $\rho$ is ``uniformly power surjective", a strong form of integrality.

\begin{lemma}
\label{lem:Bockstein}
Let $Y$ be a connected space and let $d$ be an integer such that
$p^d \cdot  \hbox{\rm T}_p H^*(Y; \Z^\wedge_p) = 0$.
If $u \in H^*(Y; \F_p)$, then $u^{p^d}$ belongs to the image of
$\rho: H^*(Y; \Z^\wedge_p) \otimes\F_p \rightarrow H^*(Y; \F_p)$.
\end{lemma}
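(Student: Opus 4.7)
By the universal coefficient sequence \eqref{universal}, proving $u^{p^d}\in\im(\rho)$ amounts to showing $\partial(u^{p^d})=0$. Identifying $\operatorname{Tor}(H^{*+1}(Y;\Z^\wedge_p),\Z/p)$ with the $p$-torsion subgroup $H^{*+1}(Y;\Z^\wedge_p)[p]$, the map $\partial$ becomes the integral Bockstein $\beta_\Z\colon H^*(Y;\F_p)\to H^{*+1}(Y;\Z^\wedge_p)$ attached to $0\to\Z^\wedge_p\xrightarrow{p}\Z^\wedge_p\to\F_p\to 0$. The plan is to prove, by induction on $k\ge 0$, the divisibility statement
$$\beta_\Z(u^{p^k})\in p^k\,H^{*+1}(Y;\Z^\wedge_p). \qquad (\ast)$$
Once $(\ast)$ is known at $k=d$, write $\beta_\Z(u^{p^d})=p^d\eta$; since $\beta_\Z$ always lands in $p$-torsion, $p^{d+1}\eta=0$, so $\eta\in \mathrm{T}_pH^{*+1}(Y;\Z^\wedge_p)$. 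The bounded-torsion hypothesis $p^d\cdot \mathrm{T}_pH^*(Y;\Z^\wedge_p)=0$ then forces $\beta_\Z(u^{p^d})=p^d\eta=0$, which is the desired conclusion.

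The inductive step I would run at the cochain level. Set $v=u^{p^{k-1}}$, so by induction $\beta_\Z(v)=p^{k-1}\zeta$ for some class $\zeta$. Pick a cochain lift $v'\in C^*(Y;\Z^\wedge_p)$ of a cocycle representative of $v$; a standard modification (replacing $v'$ by $v'-pc$ for a suitable cochain $c$, obtained from a cobounding of $\tfrac{1}{p}\,dv'-p^{k-1}\zeta'$) allows one to arrange $dv'=p^k\,\zeta'$, where $\zeta'$ is a cocycle representative of $\zeta$. The Leibniz rule then produces $d\bigl((v')^p\bigr)=p^k\,W$ with
$$W=\sum_{i=0}^{p-1}(-1)^{i|v|}(v')^i\,\zeta'\,(v')^{p-1-i},$$
and $W$ is a cocycle because $C^*(Y;\Z^\wedge_p)$ is $\Z^\wedge_p$-torsion-free; by construction $\beta_\Z(v^p)=p^{k-1}[W]$. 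The key step is to gain one more power of $p$ from $[W]$. Reducing modulo $p$ and using graded commutativity in $H^*(Y;\F_p)$, each summand of $[\overline W]$ collapses to $\overline\zeta\,v^{p-1}$, so $[\overline W]=\bigl(\sum_i(-1)^{i|v|}\bigr)\,\overline\zeta\,v^{p-1}$. This vanishes in every parity case: the scalar equals $p=0$ for $|v|$ even, or $2=0$ for $p=2$, while for $|v|$ odd with $p$ odd one has $v^2=0$ and hence $v^{p-1}=(v^2)^{(p-1)/2}=0$. So $\overline W$ is a coboundary in $C^*(Y;\F_p)$; lifting any such cobounding back to $C^*(Y;\Z^\wedge_p)$ gives $W=dc+pW'$ with $W'$ a cocycle, whence $[W]=p[W']$ and $\beta_\Z(v^p)=p^k[W']$, finishing the induction.

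The main technical obstacle is that the cochain algebra is not itself graded commutative, so the cancellation killing $[\overline W]$ is only available after passing to cohomology. One must therefore lift a cobounding of $\overline W$ from $C^*(Y;\F_p)$ back to $C^*(Y;\Z^\wedge_p)$ to convert this cohomological vanishing into the chain-level factorization $W=dc+pW'$ that exposes the extra factor of $p$. Tracking the signs and handling the parity distinctions ($p=2$ versus $p$ odd, $|v|$ even versus odd) is routine but must be done with care.
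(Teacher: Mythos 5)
Your proof is correct, but it follows a genuinely different route from the paper's. The paper (following \cite[Lemma~4.4]{MR885093}) works with the tower of coefficient rings $\Z/p^k$: it first shows that for any $x\in H^*(Y;\Z/p^k)$ the power $x^p$ lifts to $H^*(Y;\Z/p^{k+1})$, using the derivation property of the Bockstein of $\Z/p\to\Z/p^{k+1}\to\Z/p^k$ and a parity check; iterating, $u^{p^d}$ lies in the image of $H^*(Y;\Z/p^{d+1})\to H^*(Y;\F_p)$. It then compares the universal coefficient sequences for $\Z/p^{d+1}$ and $\F_p$ coefficients, observes that the induced map on Tor groups is restriction of multiplication by $p^d$ to torsion, hence zero by the bounded-torsion hypothesis, and concludes $\partial(u^{p^d})=0$. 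You instead bypass the intermediate coefficient rings and prove the divisibility $\beta_\Z(u^{p^k})\in p^k\,H^{*+1}(Y;\Z^\wedge_p)$ directly by induction at the cochain level, modifying the integral lift so that $dv'=p^k\zeta'$, applying Leibniz, and lifting a mod-$p$ cobounding back to $C^*(Y;\Z^\wedge_p)$ to gain the extra factor of $p$. Your parity analysis (scalar vanishes for $|v|$ even or $p=2$; $v^{p-1}=0$ for $|v|$ odd, $p$ odd) is complete and correct, and the final step ($\beta_\Z(u^{p^d})=p^d\eta$ with $\eta$ torsion, so $p^d\eta=0$) matches the paper's use of the bounded-torsion hypothesis. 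The trade-off: the paper stays at the level of cohomology groups and a standard Bockstein derivation formula plus a diagram chase, whereas your proof is more self-contained but requires careful chain-level bookkeeping, precisely because graded commutativity fails at the cochain level, as you correctly flag.
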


\begin{proof}
Following the elementary proof of \cite[Lemma~4.4]{MR885093},
we start with the observation that for any element $x \in H^*(Y; \Z/p^k)$ the $p$-th power
$x^p$ lies in the image of the reduction map $H^*(Y; \Z/p^{k+1}) \rightarrow H^*(Y; \Z/p^k)$.
The argument is as follows: If $p$ is odd and the degree of $x$ is odd,
$x^p=0$ and the conclusion follows. Otherwise, $\delta(x^p) = p \delta(x) \cdot x^{p-1} = 0$,
because the Bockstein $\delta$ coming from the short exact sequence
$\Z/p \rightarrow \Z/p^{k+1} \rightarrow \Z/p^k$ is a derivation with respect to the cup product pairing $H^*(Y;\Z/p^k)\otimes H^*(Y;\Z/p)\rightarrow H^*(Y;\Z/p)$.
Therefore $u^{p^d}$ lies in the image of the reduction $H^*(Y; \Z/p^{d+1}) \rightarrow H^*(Y; \F_p)$.
\par
The diagram of short exact sequences
\[
\xymatrix{ 0\ar[r] &  \Z \ar[d]^{\cdot p^d} \ar[r]^{\cdot p^{d+1}} &
\Z \ar@{=}[d]\ar[r] & \Z/p^{d+1}
\ar@{->>}[d]\ar[r] &  0
\\
0\ar[r] & \Z \ar[r]^-{\cdot p} &
\Z\ar[r] & \Z/p \ar[r] &  0}
\]
induces the commutative diagram of exact rows:
\[
\xymatrix{ 0\to   \hbox{\rm Tor} (H^{*+1}(Y; \Z^\wedge_p); \Z/p^{d+1}) \ar[d] \ar[r] &
H^{*+1}(Y; \Z^\wedge_p) \ar[d]^{\cdot p^d}\ar[r]^{\cdot p^{d+1}} & H^{*+1}(Y; \Z^\wedge_p)\ar@{=}[d]
\\
0\to   \hbox{\rm Tor} (H^{*+1}(Y; \Z^\wedge_p); \Z/p) \ar[r] &
H^{*+1}(Y; \Z^\wedge_p) \ar[r]^{\cdot p} & H^{*+1}(Y; \Z^\wedge_p)}
\]
Since $p^d \cdot  \hbox{\rm T}_p H^*(Y; \Z^\wedge_p) = 0$, the left vertical morphism is zero.
Consider now the two universal coefficient sequences
relating the cohomology of $Y$
with coefficients in $\F_p$, respectively in $\Z/p^{d+1}$,
to the cohomology of $Y$ with coefficients in $\Z^\wedge_p$ :
\[
\xymatrix{ 0\to  H^*(Y; \Z^\wedge_p)\otimes \Z/p^{d+1} \ar[d] \ar[r] &
H^*(Y; \Z/p^{d+1}) \ar[d]\ar[r] & \hbox{\rm Tor} (H^{*+1}(Y; \Z^\wedge_p); \Z/p^{d+1})
\ar[d]\to  0
\\
0\to  H^*(Y; \Z^\wedge_p)\otimes \F_p \ar[r]^-{\rho} &
H^*(Y; \F_p) \ar[r]^-{\partial} & \hbox{\rm Tor} (H^{*+1}(Y; \Z^\wedge_p); \Z/p) \to  0}
\]
where the vertical morphisms are induced by the mod $p$ reduction $\Z/p^{d+1} \to \Z/p$. The element
$u^{p^d}$ lies in the image of the mod $p$ reduction and we have shown that the morphism between the
torsion groups on the right is zero. Therefore $\partial(u^{p^d})=0$, which implies that $u^{p^d}$ is in $\im\rho$.
\end{proof} 

\begin{lemma}
\label{prop:finitetype2}
Let $Y$ be a connected space. If the graded $\F_p$-algebra $H^*(Y; \F_p)$ is Noetherian
and if $H^*(Y; \Z^\wedge_p)$ has bounded torsion,
then $H^*(Y; \F_p)$ is a finitely generated module over $H^*(Y; \Z^\wedge_p)\otimes \F_p$.
\end{lemma}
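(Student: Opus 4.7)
The plan is to combine the Noetherian hypothesis on $H^*(Y;\F_p)$ with Lemma~\ref{lem:Bockstein} to exhibit an explicit finite set of elements generating $H^*(Y;\F_p)$ as a module over $H^*(Y;\Z^\wedge_p)\otimes \F_p$. The underlying idea is a standard integrality / Artin--Tate reduction: a sufficiently large $p$-th power of any algebra generator already lies in $\im\rho$, so every generator is integral over $\im\rho$ with a uniform bound on the degree of its integrality relation.

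Concretely, since the graded $\F_p$-algebra $H^*(Y;\F_p)$ is Noetherian, it is finitely generated as an $\F_p$-algebra by \cite[Theorem~13.1]{MR1011461}, and I may fix a finite set of homogeneous generators $u_1,\ldots,u_n$. Let $d$ be the integer with $p^d\cdot\hbox{\rm T}_p H^*(Y;\Z^\wedge_p)=0$ provided by the bounded-torsion hypothesis. Lemma~\ref{lem:Bockstein} then gives $u_i^{p^d}\in\im\rho$ for each $i$. Writing an arbitrary element of $H^*(Y;\F_p)$ as a polynomial in the $u_i$, using graded commutativity to sort each monomial into a fixed ordering, and then performing Euclidean division on each exponent by $p^d$ in order to extract a factor $(u_i^{p^d})^{q_i}\in\im\rho$, I conclude that $H^*(Y;\F_p)$ is generated as an $\im\rho$-module by the finite family of monomials $u_1^{e_1}\cdots u_n^{e_n}$ with $0\le e_i<p^d$.

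Since $\rho$ surjects onto $\im\rho$, this same finite set generates $H^*(Y;\F_p)$ as a module over $H^*(Y;\Z^\wedge_p)\otimes \F_p$, which is the desired conclusion. The only point that deserves a moment of care is the graded-commutative bookkeeping: for $p$ odd and a generator $u_i$ of odd degree one already has $u_i^2=0$, so such a generator appears with exponent $0$ or $1$ and the reduction modulo $p^d$ is vacuous; for $p=2$ the algebra is strictly commutative and the monomial reduction is entirely routine. I therefore do not expect any real obstacle beyond invoking the already-proven Lemma~\ref{lem:Bockstein}; the rest is a clean integrality argument.
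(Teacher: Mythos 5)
Your proof is correct and follows essentially the same route as the paper: choose finitely many homogeneous algebra generators, apply Lemma~\ref{lem:Bockstein} to show their $p^d$-th powers lie in $\im\rho$, and then observe that the finitely many monomials with exponents below $p^d$ generate $H^*(Y;\F_p)$ over $H^*(Y;\Z^\wedge_p)\otimes\F_p$. The only cosmetic difference is that you apply the lemma to each generator separately and use multiplicativity of $\im\rho$, whereas the paper applies it once to the extracted factor $x$; your extra remarks on graded-commutative bookkeeping are a harmless clarification the paper leaves implicit.
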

\begin{proof}
This is clear since Lemma \ref{lem:Bockstein} implies that $H^*(Y; \F_p)$ is integral over $\im\rho$.
Explicitely, let us choose homogeneous generators $w_1, \cdots, w_n$ of the graded
algebra $H^*(Y; \F_p)$ and consider the finite set $W$ of monomials of the
form $w_1^{r_1} \cdots w_n^{r_n}$ with $0 \leq r_i < p^d$.
We show that the set $W$ generates $H^*(Y; \F_p)$ as a module over
$H^*(Y; \Z^\wedge_p)\otimes \F_p$.
For, consider any monomial $m = w_1^{s_1} \cdots w_n^{s_n}$ in
$H^*(Y; \F_p)$. Writing the exponents $s_i = r_i + p^d t_i$
with $0 \leq r_i < p^d$, we express
$m=x^{p^d}\cdot w$ for a monomial $w$ in $W$ and an homogeneous element $x$.
By Lemma \ref{lem:Bockstein}, $x^{p^d}$ lifts to an element $a$ in $H^*(Y; \Z^\wedge_p)\otimes \F_p$
and $m=\rho(a)\cdot w$.
\end{proof}
\begin{theorem}
\label{thm:padicnoetherian}
Let $Y$ be a connected space with finite fundamental group. Then, the graded $\Z^\wedge_p$-algebra $H^*(Y; \Z^\wedge_p)$ is Noetherian
if and only if
the graded $\F_p$-algebra $H^*(Y;\F_p)$ is Noetherian and the torsion in $H^*(Y; \Z^\wedge_p)$ is bounded.
\end{theorem}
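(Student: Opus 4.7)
The key object is $B := H^*(Y;\Z^\wedge_p)$ together with its mod $p$ reduction $\overline{B} := B \otimes \F_p$, which by the universal coefficient sequence \eqref{universal} injects as a subalgebra into $H^*(Y;\F_p)$. My plan is to organize both implications around the comparison of finite generation along the chain $\Z^\wedge_p \to B \to \overline{B} \hookrightarrow H^*(Y;\F_p)$, using the two preparatory lemmas together with Artin--Tate and Nakayama.

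For the forward implication, suppose $B$ is Noetherian. The $p$-power torsion is an ideal of $B$, hence finitely generated; each generator is annihilated by some $p^{d_i}$, and the maximum of the $d_i$ yields a uniform bound, so the torsion is bounded. Lemma~\ref{lem:finitetypemodp} then says that $H^*(Y;\F_p)$ is a finitely generated $\overline{B}$-module. Since $\overline{B}$ is a quotient of $B$ it is itself a finitely generated $\F_p$-algebra, so combining $\overline{B}$-module generators of $H^*(Y;\F_p)$ with $\F_p$-algebra generators of $\overline{B}$ exhibits $H^*(Y;\F_p)$ as a finitely generated $\F_p$-algebra, i.e.\ Noetherian.

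The converse is where the work lies. Under the hypotheses, Lemma~\ref{prop:finitetype2} gives $H^*(Y;\F_p)$ finitely generated over $\overline{B}$; then the Artin--Tate Lemma applied to $\F_p \subset \overline{B} \subset H^*(Y;\F_p)$ shows that $\overline{B}$ itself is a finitely generated $\F_p$-algebra. Choose homogeneous lifts $a_1,\dots,a_n \in B$ of a set of algebra generators of $\overline{B}$ and set $A := \Z^\wedge_p[a_1,\dots,a_n] \subset B$. By construction the inclusion $A \hookrightarrow B$ is surjective on mod $p$ reductions, i.e.\ $A + pB = B$. To upgrade this to $A = B$, I would work one degree at a time. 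Because $H^*(Y;\F_p)$ is a finitely generated graded $\F_p$-algebra it is of finite type, so the finite-$\pi_1$ hypothesis together with Proposition~\ref{padicfinitetype} ensures that each $B^n$ is a finitely generated $\Z^\wedge_p$-module. Then $B^n/A^n$ is a finitely generated $\Z^\wedge_p$-module satisfying $p\,(B^n/A^n) = B^n/A^n$, and Nakayama's lemma for the local ring $\Z^\wedge_p$ forces $B^n = A^n$ in every degree, so $B = A$ is a finitely generated $\Z^\wedge_p$-algebra.

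The main obstacle is precisely this final passage from $\overline{B}$ to $B$: mod $p$ Noetherianity alone does not imply integral Noetherianity (as $(BS^3)^\wedge_p$ illustrates). It is the bounded-torsion hypothesis that lets Lemma~\ref{prop:finitetype2} deliver the module-finiteness of $H^*(Y;\F_p)$ over $\overline{B}$ needed to run Artin--Tate, and it is the finiteness of $\pi_1 Y$ entering through Proposition~\ref{padicfinitetype} that makes the degree-wise Nakayama step legitimate; without either input the lifting of algebra generators from $\overline{B}$ to $B$ would fail.
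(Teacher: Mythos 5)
Your proof is correct, and the overall architecture of the argument --- use Lemma~\ref{prop:finitetype2} (resp.~Lemma~\ref{lem:finitetypemodp}) to obtain $H^*(Y;\F_p)$ finitely generated as a module over $\overline{B}:=H^*(Y;\Z^\wedge_p)\otimes\F_p$, deduce Noetherianity of $\overline{B}$, and lift back to $B:=H^*(Y;\Z^\wedge_p)$ --- matches the paper's. The forward implication is essentially identical. In the converse, however, you substitute different commutative-algebra inputs at both key steps. Where the paper applies the graded Eakin--Nagata theorem (Proposition~\ref{gradedEakinNagata}) to the inclusion $\overline{B}\subset H^*(Y;\F_p)$ to conclude that $\overline{B}$ is Noetherian, you invoke the Artin--Tate lemma on the chain $\F_p\subset\overline{B}\subset H^*(Y;\F_p)$; this is the more elementary tool, and it yields the slightly sharper conclusion that $\overline{B}$ is a finitely generated $\F_p$-algebra outright. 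Then, where the paper lifts from $\overline{B}$ to $B$ via Corollary~\ref{cor:Hausdorff-algebra}, whose proof (through Proposition~\ref{prop:Hausdorff}) is a by-hand successive $p$-adic approximation, you choose homogeneous lifts $a_1,\dots,a_n\in B$ of algebra generators of $\overline{B}$, observe that the $\Z^\wedge_p$-subalgebra $A$ they generate satisfies $A+pB=B$, and conclude $A^n=B^n$ in each degree by Nakayama --- which is legitimate exactly because Proposition~\ref{padicfinitetype} supplies degree-wise finite generation of $B^n$ over the local ring $\Z^\wedge_p$. Your Nakayama packaging is cleaner here; the paper's Proposition~\ref{prop:Hausdorff} is deliberately stated for Hausdorff modules in greater generality so that it can be reused later for the twisted-coefficient modules $H^*(Y;M)$ in the proof of Lemma~\ref{lem:torsionfreemodule}, so the two treatments trade a streamlined proof of this theorem against a more reusable appendix.
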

\begin{proof}
Assume first that $H^*(Y; \Z^\wedge_p)$ is a Noetherian
$\Z^\wedge_p$-algebra. By Lemma \ref{lem:finitetypemodp},
$H^*(Y;\F_p)$ is a finitely generated module over $H^*(Y; \Z^\wedge_p)\otimes \F_p$.
Since $H^*(Y; \Z^\wedge_p)\otimes \F_p$ is a Noetherian $\F_p$-algebra,
it follows from \cite[Proposition 7.2]{MR0242802} that
$H^*(Y;\F_p)$ is also a Noetherian $\F_p$-algebra. The torsion part $\text{T}_p H^*(Y; \Z^\wedge_p)$
is an ideal of the Noetherian algebra $H^*(Y; \Z^\wedge_p)$, hence is finitely generated.
The order of the torsion is thus bounded by the order of
its generators.
\par
Suppose now that $H^*(Y;\F_p)$ is a Noetherian $\F_p$-algebra and that
the torsion in $H^*(Y; \Z^\wedge_p)$ is bounded. Then, by
Lemma~\ref{prop:finitetype2}, $H^*(Y;\F_p)$
is a finitely generated module over
$H^*(Y; \Z^\wedge_p)\otimes \F_p$. As a consequence of
the graded version of the so-called Eakin-Nagata Theorem, see
Proposition~\ref{gradedEakinNagata}, we infer
then that the graded subring $H^*(Y; \Z^\wedge_p)\otimes \F_p$ of
$H^*(Y;\F_p)$ is also Noetherian. Since $H^*(Y;\F_p)$ is finitely generated, Proposition~\ref{padicfinitetype} shows 
that $H^n(Y;\Z^\wedge_p)$ is a finitely generated $\Z^\wedge_p$-module, hence Hausdorff, in each degree. 
Thus $H^*(Y; \Z^\wedge_p)$ is a Noetherian $\Z^\wedge_p$-algebra by Corollary~\ref{cor:Hausdorff-algebra}.
\end{proof}

We end this section with an example which shows that
Theorem~\ref{thm:padicnoetherian} does not hold
without the condition on torsion.
\begin{example}
\label{ex:ABN}
{\rm
Aguad\'e, Broto, and Notbohm constructed in \cite{ABN} spaces $X_k(r)$ for any odd
prime $p$ with $r|p-1$ and $k\geq 0$ satisfying :
$H^*(X_k(r); \F_p) \cong \F_p[x_{2r}] \otimes E(\beta^{(k+1)} x_{2r})$
where $\beta^{(k+1)}$ denotes the Bockstein of order $k+1$.
Observe that
$H^*(X_k(r); \F_p)$ is a Noetherian $\F_p$-algebra.
The torsion of $H^*(X_k(r); \Z^\wedge_p)$ is unbounded by \cite[Remark~5.8]{ABN}.
Theorem~\ref{thm:padicnoetherian} shows that the algebra
$H^*(X_k(r); \Z^\wedge_p)$ is not Noetherian.
}
\end{example}

\section{Cohomology with twisted coefficients}
\label{sec:twisted}
In this section we work over a ring $R$ which is either $\Z^\wedge_p
$ or $ \F_p$. Let $Y$ be a connected space whose fundamental
group is a finite $p$-group. Let $M$ be a $ R[\pi_1 Y]$-module which is a
finitely generated $R$-module. We aim to show that the cohomology
with twisted coefficients $H^*(Y; M)$ is Noetherian as a module over
$H^*(Y; R)$ if $H^*(Y, R)$ is Noetherian.
We shall deal separately with the field of $p$ elements and with the ring of
$p$-adic integers.
\par
We start with a standard Noetherianity result.
\begin{lemma}
\label{lem:shortexact}
Let $R=\Z^\wedge_p $ or $ \F_p$. Let $Y$ be a space and let
$0 \rightarrow N \rightarrow M \rightarrow Q\rightarrow 0$
be a short exact sequence of $R[\pi_1 Y]$-modules.
If both $H^*(Y; N)$ and $H^*(Y; Q)$ are Noetherian modules over $H^*(Y;R)$,
then so is $H^*(Y; M)$.
\end{lemma}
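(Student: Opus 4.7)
The plan is to run the long exact sequence in cohomology associated with the short exact coefficient sequence $0 \to N \to M \to Q \to 0$, and to regard it throughout as an exact sequence of graded $H^*(Y;R)$-modules. Concretely, one has
\[
\cdots \to H^n(Y;N) \xrightarrow{f} H^n(Y;M) \xrightarrow{g} H^n(Y;Q) \xrightarrow{\delta} H^{n+1}(Y;N) \to \cdots,
\]
where the maps $f$ and $g$ are induced by honest $R[\pi_1 Y]$-module homomorphisms and are therefore manifestly $H^*(Y;R)$-linear.

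The only non-formal verification is that the connecting homomorphism $\delta$ is also $H^*(Y;R)$-linear, up to signs that are harmless for the module structure. This follows from the usual zig-zag at the cochain level: represent a class in $H^n(Y;Q)$ by a cocycle $c$, lift to a cochain $\tilde c \in C^n(Y;M)$, and observe that $d\tilde c$ is the image of a cocycle in $C^{n+1}(Y;N)$. Multiplication by a cocycle $r \in C^*(Y;R)$ commutes with this lifting, and since $dr = 0$ one has $d(r \cdot \tilde c) = \pm\, r \cdot d\tilde c$ at the cochain level, whence $\delta(r \cdot x) = \pm\, r \cdot \delta(x)$ in cohomology.

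From the long exact sequence I would then extract the short exact sequence
\[
0 \to \im f \to H^*(Y;M) \to \im g \to 0
\]
of graded $H^*(Y;R)$-modules. Now $\im f$ is a quotient of the Noetherian module $H^*(Y;N)$ and hence itself Noetherian, while $\im g$ is a submodule of the Noetherian module $H^*(Y;Q)$ and is also Noetherian. The standard fact that an extension of two Noetherian modules is again Noetherian (no Noetherianity assumption on the base ring is required) then gives the desired conclusion for $H^*(Y;M)$. The argument is entirely formal and presents no real obstacle; the one point requiring a moment of care is simply the $H^*(Y;R)$-linearity of the connecting map.
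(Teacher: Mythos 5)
Your proof is correct and follows essentially the same route as the paper: use the long exact sequence in cohomology as a sequence of $H^*(Y;R)$-modules, extract $0 \to \operatorname{im} f \to H^*(Y;M) \to \operatorname{im} g \to 0$, and conclude that an extension of a quotient of $H^*(Y;N)$ by a submodule of $H^*(Y;Q)$ is Noetherian. The paper states this in two sentences and leaves the $H^*(Y;R)$-linearity of the connecting map implicit, which you correctly identify as the one point worth a moment's care.
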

\begin{proof}
The long exact sequence in cohomology induced by
the short exact sequence of modules is one of $H^*(Y; R)$-modules. It
exhibits $H^*(Y; M)$ as an extension of a submodule of $H^*(Y; Q)$
by a quotient of $H^*(Y;N)$.
\end{proof}
\subsection{The case of $\F_p$-vector spaces}
\label{sec:modp}
To prove the next result we follow Minh and Symonds' approach  for profinite groups,
\cite[Lemma~1]{MinhSymondsPreprint}.
\begin{theorem}
\label{thm:Fpmodule}
Let $Y$ be a connected space such that $\pi_1 Y$ is a finite
$p$-group and let $M$ be a finite $\F_p[\pi_1 Y]$-module. If the
graded $\F_p$-algebra $H^*(Y; \F_p)$ is Noetherian, then $H^*(Y; M)$
is Noetherian as a module over $H^*(Y; \F_p)$.
\end{theorem}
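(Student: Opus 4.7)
The plan is to reduce, via Lemma~\ref{lem:shortexact}, to the case where $M$ is a simple $\F_p[G]$-module, with $G = \pi_1 Y$. Since $M$ is a finite $\F_p[G]$-module, it is in particular finite-dimensional over $\F_p$ and therefore admits a finite composition series
\[
0 = M_0 \subsetneq M_1 \subsetneq \dots \subsetneq M_k = M
\]
of $\F_p[G]$-submodules whose successive quotients are simple. Applying Lemma~\ref{lem:shortexact} inductively to the short exact sequences $0 \to M_{i-1} \to M_i \to M_i/M_{i-1} \to 0$, it suffices to establish the conclusion when $M$ is a simple $\F_p[G]$-module.

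The key representation-theoretic input is that, when $G$ is a finite $p$-group and $K$ is a field of characteristic $p$, the only simple $K[G]$-module up to isomorphism is the trivial module $K$. Equivalently, any nonzero $K[G]$-module $V$ satisfies $V^G \neq 0$, which is precisely the fact underlying Lemma~\ref{lem:finiteinvariants} (applied to a nonzero simple submodule). Consequently, the only possibility in our reduction is $M \cong \F_p$ with trivial $G$-action.

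For this base case, the local coefficient system associated to the trivial $\F_p[G]$-module is the constant system, so the twisted cohomology $H^*(Y;\F_p)$ agrees canonically with ordinary mod~$p$ cohomology. The hypothesis that $H^*(Y;\F_p)$ is a Noetherian $\F_p$-algebra says exactly that $H^*(Y;\F_p)$ is Noetherian as a module over itself, which finishes the proof.

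The argument is essentially a dévissage, and there is no substantial obstacle: the content is concentrated in the representation-theoretic fact that modular simple modules for a finite $p$-group in characteristic $p$ are trivial. The delicacy handled by Minh and Symonds for profinite groups (namely dealing with an inverse system of finite modules and with infinite composition data) does not arise here, because $M$ is assumed to be a finite $\F_p$-vector space from the outset.
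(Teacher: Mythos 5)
Your proof is correct and takes essentially the same dévissage approach as the paper: the paper inducts on $\dim_{\F_p}M$ by splitting off $M^G$ via Lemma~\ref{lem:shortexact} (using that $M^G\neq 0$ for nonzero $M$ when $G$ is a finite $p$-group), while you reduce to composition factors, which by the same representation-theoretic fact are all the trivial module $\F_p$. The two formulations are equivalent and rest on identical ingredients.
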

\begin{proof}
We use induction on $\dim_{\F_p} M$. Since $G=\pi_1(Y)$ is a
finite $p$-group, the invariant submodule $M^G$ is not trivial when $M$ is not trivial.
The induction step follows by applying Lemma~\ref{lem:shortexact} to the short exact
sequence $0 \rightarrow M^G \rightarrow M \rightarrow M/M^G \rightarrow 0$.
\end{proof}
\subsection{The case of $\Z^\wedge_p$-modules}
\label{sec:torsion}
We consider in this section the cohomology with twisted coefficients
$H^*(Y; M)$ of a connected space $Y$ where $M$ is a $\Z^\wedge_p
[\pi_1 Y]$-module which is finitely generated over~$\Z^\wedge_p$. In
a first step, let $M$ be a $\Z^\wedge_p [\pi_1 Y]$-module which is
finite (meaning finite as a set).
\begin{lemma}
\label{lem:torsionmodule}
Let $Y$ be a connected space such that $\pi_1 Y$ is a finite
$p$-group. Let $M$ be a $\Z^\wedge_p [\pi_1 Y]$-module which is
finite. If the graded $\Z^\wedge_p$-algebra $H^*(Y; \Z^\wedge_p)$ is Noetherian,
then $H^*(Y; M)$ is Noetherian as a module over
$H^*(Y;\Z^\wedge_p)$.
\end{lemma}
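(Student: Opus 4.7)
The plan is to induct on the order $|M|$, using the fact that any finite $\Z^\wedge_p[\pi_1 Y]$-module is a finite abelian $p$-group, and then reduce to the mod $p$ case handled by Theorem~\ref{thm:Fpmodule}. The base case $M=0$ is trivial. For the induction step, note that multiplication by $p$ gives a short exact sequence
\[
0 \rightarrow pM \rightarrow M \rightarrow M/pM \rightarrow 0
\]
of $\Z^\wedge_p[\pi_1 Y]$-modules. Since $M$ is a finite abelian $p$-group, Nakayama ensures that if $M\neq 0$ then $M/pM\neq 0$, so $pM$ is strictly smaller than $M$ and the inductive hypothesis applies to give that $H^*(Y;pM)$ is Noetherian over $H^*(Y;\Z^\wedge_p)$.

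The key point is to show that $H^*(Y;M/pM)$ is Noetherian over $H^*(Y;\Z^\wedge_p)$. Since $pM$ acts as zero, the module $M/pM$ is naturally an $\F_p[\pi_1 Y]$-module, and moreover it is finite. By Theorem~\ref{thm:Fpmodule}, $H^*(Y;M/pM)$ is a Noetherian, hence finitely generated, module over $H^*(Y;\F_p)$. To pass from $\F_p$ to $\Z^\wedge_p$ coefficients, I would invoke Lemma~\ref{lem:finitetypemodp}: since $H^*(Y;\Z^\wedge_p)$ is assumed Noetherian, the algebra $H^*(Y;\F_p)$ is finitely generated as a module over $H^*(Y;\Z^\wedge_p)\otimes\F_p$, and in particular over $H^*(Y;\Z^\wedge_p)$ itself. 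Composing, $H^*(Y;M/pM)$ is finitely generated over the Noetherian ring $H^*(Y;\Z^\wedge_p)$, hence Noetherian as an $H^*(Y;\Z^\wedge_p)$-module.

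Having established that both $H^*(Y;pM)$ and $H^*(Y;M/pM)$ are Noetherian $H^*(Y;\Z^\wedge_p)$-modules, Lemma~\ref{lem:shortexact} applied to the short exact sequence above completes the induction and shows that $H^*(Y;M)$ is Noetherian over $H^*(Y;\Z^\wedge_p)$.

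The main subtlety, and the only non-formal step, is the change-of-scalars argument in the middle paragraph: Noetherianity of a module over $H^*(Y;\F_p)$ does not a priori transfer to Noetherianity over the subring $H^*(Y;\Z^\wedge_p)$, because there are potentially more $\Z^\wedge_p$-submodules than $\F_p$-submodules. This is precisely why one needs Lemma~\ref{lem:finitetypemodp} to promote Noetherianity of $H^*(Y;M/pM)$ over $H^*(Y;\F_p)$ to finite generation, and then back to Noetherianity over the larger Noetherian ring $H^*(Y;\Z^\wedge_p)$.
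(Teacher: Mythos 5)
Your proof is correct and follows essentially the same strategy as the paper: filter $M$ by multiplication by $p$, handle the $\F_p$-vector space piece via Theorem~\ref{thm:Fpmodule} and the change-of-rings step via Lemma~\ref{lem:finitetypemodp}, and conclude by Lemma~\ref{lem:shortexact}. The only cosmetic difference is the choice of filtration: the paper inducts on the exponent of $M$ and uses the short exact sequence $0 \to M_p \to M \to M/M_p \to 0$ with the $p$-torsion submodule $M_p$ (an $\F_p$-vector space) as the subobject, whereas you induct on $|M|$ and use $0 \to pM \to M \to M/pM \to 0$, putting the $\F_p$-vector space $M/pM$ in the quotient position; since Lemma~\ref{lem:shortexact} is symmetric in the sub and quotient, both work equally well. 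One small omission: applying Theorem~\ref{thm:Fpmodule} requires $H^*(Y;\F_p)$ to be Noetherian, which follows from the hypothesis on $H^*(Y;\Z^\wedge_p)$ by Theorem~\ref{thm:padicnoetherian}; the paper makes this explicit, and you should too.
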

\begin{proof}
The module $M$ being finite, $M$ is a finite abelian $p$-group.
We perform an induction on the exponent $e$ of $M$. When $e=1$, the
module $M$ has the structure of an $\F_p$-vector space. As $H^*(Y; \F_p)$ is
a Noetherian $\F_p$-algebra by Theorem~\ref{thm:padicnoetherian},
we know from Theorem~\ref{thm:Fpmodule} that $H^*(Y; M)$ is
Noetherian as a module over $H^*(Y; \F_p)$.
The Noetherian $\Z^\wedge_p$-algebra $H^*(Y; \Z^\wedge_p)$ acts
on $H^*(Y; M)$ through \hbox{$H^*(Y; \Z^\wedge_p) \otimes \F_p$}. By Lemma \ref{lem:finitetypemodp}, $H^*(Y;\F_p)$ is finitely generated as a $H^*(Y;\Z^\wedge_p)$-module. Therefore $H^*(Y; M)$ is a Noetherian module over $H^*(Y; \Z^\wedge_p)$.
\par
Let us now assume that $e > 1$ and consider the short exact sequence
$0\rightarrow M_p \rightarrow M \rightarrow Q \rightarrow 0$ where
$M_p$ is the submodule of $M$ consisting of elements of order 1 or~$p$.
The induction step follows from Lemma~\ref{lem:shortexact}.
\end{proof}
\begin{remark}
\label{rem:nouniversal}
{\rm In the case of trivial coefficient modules our main tool was
the universal coefficient exact sequence, but this does not exist
in general for twisted coefficients. One basic counter
example is given by the module $M=\F_p[G]$ for a finite group $G$ whose
order is divisible
by $p$. Then $H^*(BG;M)$ is zero in positive degrees and the universal coefficient
formula does not hold.}
\end{remark}
In a second step we consider, as coefficient of the cohomology, a
$\Z^\wedge_p [\pi_1 Y]$-module $M$, which is free of finite
rank over $\Z^\wedge_p$.
\begin{lemma}
\label{lem:torsionfreemodule}
Let $Y$ be a connected space such that $\pi_1 Y$ is a finite
$p$-group. Let $M$ be a $\Z^\wedge_p [\pi_1 Y]$-module which is free
of finite rank over $\Z^\wedge_p$. If the graded $\Z^\wedge_p$-algebra
$H^*(Y; \Z^\wedge_p)$ is Noetherian, then $H^*(Y; M)$ is Noetherian as a module
over $H^*(Y; \Z^\wedge_p)$.
\end{lemma}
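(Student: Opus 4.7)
My plan is to reduce to Lemma~\ref{lem:torsionmodule} through the short exact sequence $0 \to M \xrightarrow{\cdot p} M \to M/pM \to 0$, extract a mod-$p$ Noetherianity statement, and then lift back to the integral one by a degreewise Nakayama argument that crucially exploits the hypothesis that $M$ is torsion-free over $\Z^\wedge_p$.

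Since $M$ is $\Z^\wedge_p$-free of finite rank, multiplication by $p$ is injective on $M$, and the quotient $M/pM$ is a finite $\Z^\wedge_p[\pi_1 Y]$-module. By Lemma~\ref{lem:torsionmodule}, $H^*(Y; M/pM)$ is Noetherian as a module over $H^*(Y; \Z^\wedge_p)$. The long exact sequence in cohomology associated with the above short exact sequence is a sequence of $H^*(Y; \Z^\wedge_p)$-modules, so examining the segment
\[
H^n(Y; M) \xrightarrow{\cdot p} H^n(Y; M) \to H^n(Y; M/pM),
\]
one sees that the kernel of the right-hand map is exactly $p H^n(Y; M)$, so $H^*(Y; M)/p H^*(Y; M)$ embeds as an $H^*(Y; \Z^\wedge_p)$-submodule of the Noetherian module $H^*(Y; M/pM)$ and is therefore itself Noetherian.

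I would then pick finitely many homogeneous generators $\bar x_1, \dots, \bar x_k$ of $H^*(Y; M)/p$, lift each one to a homogeneous element $x_i \in H^*(Y; M)$ of the same degree, and let $N$ denote the $H^*(Y; \Z^\wedge_p)$-submodule they generate. By construction $N + p H^*(Y; M) = H^*(Y; M)$. The hypothesis that $H^*(Y; \Z^\wedge_p)$ is Noetherian, combined with Theorem~\ref{thm:padicnoetherian}, ensures that $H^n(Y; \F_p)$ is finite in every degree, and Proposition~\ref{padicfinitetype} then guarantees that each $H^n(Y; \Z^\wedge_p)$ and each $H^n(Y; M)$ is finitely generated over $\Z^\wedge_p$. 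In a fixed degree $n$, the piece $N_n$ is a finite sum of images of the finitely generated $\Z^\wedge_p$-modules $H^{n - |x_i|}(Y; \Z^\wedge_p)$, hence is finitely generated over $\Z^\wedge_p$. The identity above reads $H^n(Y; M)/N_n = p \cdot (H^n(Y; M)/N_n)$, and Nakayama's lemma over the local ring $\Z^\wedge_p$ forces this quotient to vanish. Thus $H^*(Y; M) = N$ is finitely generated over the Noetherian ring $H^*(Y; \Z^\wedge_p)$, and so is Noetherian as desired.

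The step I expect to be the main obstacle is the final Nakayama bootstrap. As Remark~\ref{rem:nouniversal} warns, one cannot invoke a universal coefficient sequence for twisted cohomology, so the passage from mod-$p$ finite generation to integral finite generation has to be handled by hand; the saving grace is that Proposition~\ref{padicfinitetype} supplies just enough $\Z^\wedge_p$-finiteness in each degree for the classical Nakayama lemma over $\Z^\wedge_p$ to close the argument.
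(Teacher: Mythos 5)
Your proof is correct and follows the same overall strategy as the paper: use the short exact sequence $0 \to M \xrightarrow{\cdot p} M \to M/pM \to 0$ to realise $H^*(Y;M)/pH^*(Y;M)$ as an $H^*(Y;\Z^\wedge_p)$-submodule of the Noetherian module $H^*(Y;M/pM)$, and then lift the finitely many mod-$p$ generators to integral ones. The two differences are cosmetic rather than structural. First, you invoke Lemma~\ref{lem:torsionmodule} directly on $M/pM$, whereas the paper re-derives the relevant Noetherianity via Theorem~\ref{thm:Fpmodule} and Lemma~\ref{lem:finitetypemodp}, carefully tracking the $H^*(Y;\Z^\wedge_p)\otimes\F_p$-module structure on the way; your shortcut is harmless (Lemma~\ref{lem:torsionmodule} in the exponent-one case runs exactly that argument) and a little cleaner. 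Second, for the lifting step the paper appeals to Proposition~\ref{prop:Hausdorff}, a formal completeness argument that is a graded ``infinite Nakayama'' requiring only Hausdorffness in each degree, whereas you apply the classical Nakayama lemma degree by degree, using the stronger fact from Proposition~\ref{padicfinitetype} that each $H^n(Y;M)$ is a finitely generated $\Z^\wedge_p$-module. Both are valid here; your version is slightly more elementary at this point, while the paper's Proposition~\ref{prop:Hausdorff} is the more reusable tool and indeed reappears in the proof of Theorem~\ref{thm:padicnoetherian} via Corollary~\ref{cor:Hausdorff-algebra}.
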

\begin{proof}
The short exact sequence $0 \rightarrow M \xrightarrow{\cdot p} M
\rightarrow M \otimes \F_p \rightarrow 0$ induces in
cohomology a long exact sequence of $H^*(Y; \Z^\wedge_p)$-modules.
We see that $H^*(Y; M) \otimes \F_p$ is
a sub-$H^*(Y; \Z^\wedge_p)$-module of $H^*(Y; M \otimes\F_p)$.
Since the action of $H^*(Y; \Z^\wedge_p)$ on
both $H^*(Y; M \otimes \F_p)$ and $H^*(Y; M )\otimes \F_p$ factors through
$H^*(Y; \Z^\wedge_p)\otimes \F_p$, it follows that $H^*(Y; M) \otimes \F_p$ is a
sub-$H^*(Y; \Z^\wedge_p)\otimes \F_p$-module of $H^*(Y; M \otimes \F_p)$.
\par
This takes us back to the world of $\F_p$-vector spaces. We know by Theorem~\ref{thm:Fpmodule} 
that $H^*(Y; M \otimes\F_p)$ is a Noetherian module over $H^*(Y; \F_p)$, a Noetherian
algebra by Theorem~\ref{thm:padicnoetherian}.
As the latter is a finitely generated module over $H^*(Y; \Z^\wedge_p)\otimes \F_p$
by Lemma~\ref{lem:finitetypemodp},
we infer that $H^*(Y; M \otimes \F_p)$ is a Noetherian module
over $H^*(Y; \Z^\wedge_p)\otimes \F_p$. Therefore
$H^*(Y; M)\otimes \F_p$ is a Noetherian module over
$H^*(Y;\Z^\wedge_p)\otimes \F_p$ as well,
and since $H^*(Y;\Z^\wedge_p)$ acts on $H^*(Y; M)\otimes \F_p$ via
$H^*(Y; \Z^\wedge_p)\otimes \F_p$, it is a Noetherian module
over $H^*(Y; \Z^\wedge_p)$.
\par
Set $A^* =H^*(Y; \Z^\wedge_p)$ and $N^* = H^*(Y; M)$. Both are
finitely generated $\Z^\wedge_p$-modules in each degree by
Proposition \ref{padicfinitetype}, thus also
Hausdorff and complete. We then conclude by applying
Proposition~\ref{prop:Hausdorff}.
\end{proof}
We now prove our main theorem.
\begin{theorem}
\label{thm:main}
Let $Y$ be a connected space such that $\pi_1 Y$ is a finite
$p$-group. Let $M$ be a $\Z^\wedge_p [\pi_1 Y]$-module, which is
finitely generated over $\Z^\wedge_p$. If the graded $\Z^\wedge_p$-algebra
$H^*(Y; \Z^\wedge_p)$ is Noetherian, then $H^*(Y; M)$ is Noetherian as a
module over $H^*(Y; \Z^\wedge_p)$.
\end{theorem}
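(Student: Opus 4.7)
The plan is to reduce the general statement to the two cases already handled in this section: the case of a finite $\Z^\wedge_p[\pi_1 Y]$-module, treated in Lemma~\ref{lem:torsionmodule}, and the case of a $\Z^\wedge_p[\pi_1 Y]$-module that is free of finite rank over $\Z^\wedge_p$, treated in Lemma~\ref{lem:torsionfreemodule}. The bridge between these two cases and the general one is the structure theorem for finitely generated modules over the PID $\Z^\wedge_p$.

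Concretely, I would consider the $\Z^\wedge_p$-torsion submodule $T \subseteq M$ and form the short exact sequence
\[
0 \to T \to M \to M/T \to 0.
\]
Since $M$ is finitely generated over the PID $\Z^\wedge_p$, the submodule $T$ is a finite direct sum of cyclic $p$-primary modules, hence finite as a set, while the quotient $M/T$ is free of finite rank over $\Z^\wedge_p$. The only small verification to make is that $T$ is stable under the action of $\pi_1 Y$, so that the above is a short exact sequence of $\Z^\wedge_p[\pi_1 Y]$-modules; this is immediate, since $\Z^\wedge_p$-linearity of the action ensures $p^n m = 0$ implies $p^n(gm) = 0$ for every $g \in \pi_1 Y$.

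Once the sequence is in place, Lemma~\ref{lem:torsionmodule} applied to the finite $\Z^\wedge_p[\pi_1 Y]$-module $T$ shows that $H^*(Y; T)$ is Noetherian over $H^*(Y; \Z^\wedge_p)$, and Lemma~\ref{lem:torsionfreemodule} applied to the torsion-free module $M/T$ shows that $H^*(Y; M/T)$ is Noetherian over $H^*(Y; \Z^\wedge_p)$. A final application of Lemma~\ref{lem:shortexact} to the induced long exact cohomology sequence then yields that $H^*(Y; M)$ is Noetherian over $H^*(Y; \Z^\wedge_p)$, concluding the proof. There is essentially no remaining obstacle at this stage: the serious work has already been carried out in Lemmas~\ref{lem:torsionmodule} and~\ref{lem:torsionfreemodule}, and the present theorem is just the assembly of these ingredients via the PID structure theorem.
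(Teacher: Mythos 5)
Your proposal is correct and follows essentially the same route as the paper: it takes the torsion submodule $T \subseteq M$, applies Lemma~\ref{lem:torsionmodule} to $T$ and Lemma~\ref{lem:torsionfreemodule} to the free quotient $M/T$, and concludes with Lemma~\ref{lem:shortexact}. The small additional check that $T$ is a $\Z^\wedge_p[\pi_1 Y]$-submodule is a nice touch that the paper leaves implicit.
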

\begin{proof}
Let $TM$ be the torsion submodule of $M$ and consider the short
exact sequence of $\Z^\wedge_p [\pi_1 Y]$-modules $0
\rightarrow TM \rightarrow M \rightarrow Q \rightarrow 0$. We know
from Lemma~\ref{lem:torsionmodule} that $H^*(Y; TM)$ is a
Noetherian $H^*(Y; \Z^\wedge_p)$-module and from
Lemma~\ref{lem:torsionfreemodule} that so is $H^*(Y; Q)$. We
conclude by Lemma~\ref{lem:shortexact}.
\end{proof}
\begin{remark}
\label{rem:Natalia}
{\rm Our main theorem makes no assumption,
except that the fundamental group be a $p$-group.
One could try to relax it with transfer arguments,
requiring a version of the transfer with twisted coefficients.
However, recent work of Levi and Ragnarsson, in the context of $p$-local
finite group theory, provides \cite[Proposition 3.1]{Levi-Ragnarsson}
an example showing that such a transfer might not have, in general,
the properties we need when the fundamental group of the space is not a $p$-group.}
\end{remark}

\section{The case of $p$-compact groups and $p$-local finite groups}
\label{sec:pcg}
We arrive at the promised application to $p$-compact groups and $p$-local finite groups. By
definition, a $p$-compact group is a mod $p$ finite loop space $X = \Omega BX$, where the
``classifying space" $BX$ is $p$-complete, \cite{MR1274096}.

\begin{lemma}
\label{lem:pcgbounded}
Let $X$ be a $p$-compact group. Then the $p$-torsion in $H^*(BX; \Z^\wedge_p)$
is bounded.
\end{lemma}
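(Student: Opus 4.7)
The plan is to compare $BX$ with the classifying space $BN$ of the maximal torus normalizer of the $p$-compact group $X$, whose existence is a central result of Dwyer--Wilkerson \cite{MR1274096}. Recall that $N$ sits in an extension $1 \to T \to N \to W \to 1$, where $T$ is a $p$-compact torus and $W$ is the finite Weyl group. The map $\pi\colon BT \to BN$ has homotopy fiber $N/T \simeq W$, which is discrete, so it is a $|W|$-fold covering space.

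The first step is to show that $|W|$ annihilates the $p$-torsion in $H^*(BN;\Z^\wedge_p)$. Since $H^*(BT;\Z^\wedge_p)$ is a polynomial $\Z^\wedge_p$-algebra on classes in degree two, it is torsion-free. Therefore any $p$-torsion element $x \in H^*(BN;\Z^\wedge_p)$ must map to zero under $\pi^*$. The transfer of the finite cover provides a map $\tau_* \colon H^*(BT;\Z^\wedge_p) \to H^*(BN;\Z^\wedge_p)$ satisfying $\tau_* \circ \pi^* = |W|\cdot \operatorname{id}$; applied to $x$, this yields $|W|\cdot x = 0$. Hence the $p$-torsion in $H^*(BN;\Z^\wedge_p)$ is bounded by $|W|$.

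The second step will transfer this bound from $BN$ to $BX$. The map $BN \to BX$ has homotopy fiber $X/N$, which is $\F_p$-finite in the $p$-compact sense. A short Euler characteristic computation using multiplicativity in the tower of homotopy fibers $W \to X/T \to X/N$ obtained from $BT \to BN \to BX$, together with the Dwyer--Wilkerson formula $\chi(X/T) = |W|$, gives $\chi(X/N) = \chi(X/T)/|W| = 1$. The Becker--Gottlieb transfer, carried out in the $p$-complete stable category, then exhibits $\Sigma^\infty_+ BX$ as a stable summand of $\Sigma^\infty_+ BN$. In cohomology, $H^*(BX;\Z^\wedge_p)$ is a direct $\Z^\wedge_p$-summand of $H^*(BN;\Z^\wedge_p)$, so the bound from the first step transports to give that $|W|$ annihilates the $p$-torsion in $H^*(BX;\Z^\wedge_p)$.

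The main obstacle is conceptual rather than technical: all the non-formal inputs --- existence of the maximal torus normalizer with its extension structure, the identity $\chi(X/T) = |W|$, and the availability of a Becker--Gottlieb transfer for the $\F_p$-finite fiber $X/N$ in the $p$-complete stable category --- rely on the foundational theory of $p$-compact groups in \cite{MR1274096}. Granted these, the argument is a routine manipulation of transfers and finite covers.
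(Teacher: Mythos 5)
Your argument is correct, and it follows the same transfer philosophy as the paper but uses a different subgroup. The paper compares $BX$ with $BS$, where $S$ is the maximal toral $p$-compact subgroup (the ``$p$-normalizer'' $N_p(T)$, an extension of the torus by a finite $p$-group $P$). There the Euler characteristic of the homotopy fiber is only prime to $p$, so after noting that the composite $\iota\circ\tau$ is an isomorphism on mod $p$ cohomology the authors must invoke a Nakayama-type argument to promote this to an isomorphism on $\Z^\wedge_p$-cohomology and conclude that $\iota^*$ is injective on $p$-adic cohomology. You instead compare $BX$ with $BN$ for the full maximal torus normalizer, where $\chi(X/N)=1$; this makes $(\Sigma^\infty_+BX)^\wedge_p$ a genuine stable retract of $(\Sigma^\infty_+BN)^\wedge_p$, so you avoid the Nakayama step entirely. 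In the second half the two proofs are again parallel: the paper covers $BS$ by $BH^\wedge_p$ (deck group $P$, a finite $p$-group), you cover $BN$ by $BT$ (deck group $W$, finite but not a $p$-group); since only $p$-torsion is at stake, multiplication by $|W|$ is as good as multiplication by $|P|=|W|_p$, so both yield the same numerical bound. The price of your route is that $N$ is not itself a $p$-compact group (the Weyl group is not a $p$-group), so one must check that $BN$ and the fibration $X/N\to BN\to BX$ with $\F_p$-finite fiber are in good enough shape for Dwyer's transfer; this is indeed part of the foundational theory in Dwyer--Wilkerson, and you correctly flag it as the non-formal input. The paper's choice of $S$ sidesteps this by staying inside the category of $p$-compact groups, at the cost of the extra Nakayama argument.
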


\begin{proof}
By \cite[Proposition 9.9]{MR1274096}, any $p$-compact group admits a
maximal toral $p$-compact subgroup  $S$ such that $\iota\colon BS\rightarrow BX$
is a monomorphism and the Euler characteristic $\chi$ of the homotopy
fibre is prime to $p$ (see \cite[Proof of 2.4, page 431]{MR1274096}).  The Euler characteristic is the alternating sum 
of the ranks of the $\F_p$-homology  groups. Dwyer constructed a transfer map $\tau \colon \Sigma^\infty BX \rightarrow \Sigma^\infty BS$
in \cite{MR1384465} such that $\iota \circ \tau$ induces multiplication by $\chi$ on mod $p$ cohomology.
This is an isomorphism, so that the homotopy cofiber $C$ of $\iota \circ \tau$ has trivial mod $p$ cohomology.
\par
Moreover both $BX$ and $BS$ have finite mod $p$ cohomology in each degree and 
finite fundamental group, \cite[Lemma 2.1]{MR1274096}. Proposition~\ref{padicfinitetype} 
applies and in any degree, the $p$-adic cohomology modules of $BX$ and $BS$ are finitely
generated over $\Z^\wedge_p$.
The long exact sequence in cohomology associated to a cofibration then shows that 
the $\Z^\wedge_p$-modules $H^n(C; \Z^\wedge_p)$ are finitely generated for all $n$.
Since $H^*(C; \F_p)$ is trivial, it follows from the universal coefficient exact sequence 
\eqref{universal} that $H^*(C; \Z^\wedge_p)\otimes \F_p$ is trivial as well.
We conclude, by the Nakayama lemma, that $H^*(C; \Z^\wedge_p)$ is trivial,
i.e. $\iota \circ \tau$ induces also an isomorphism in  cohomology
with $p$-adic coefficients.
Therefore $\iota^*\colon H^*(BX;\Z^\wedge_p)\rightarrow H^*(BS;\Z^\wedge_p)$ is a monomorphism.
We are reduced to show that $H^*(BS;\Z^\wedge_p)$ has bounded torsion.
\par
Now, a toral $p$-compact group $S$ can be constructed, up to $p$-completion,
as an extension of a finite $p$-group $P$ and a discrete torus $H=\bigoplus \Z_{p^\infty}$. 
The fibration $BH^\wedge_p\simeq K(\bigoplus \Z^\wedge_p, 2)\rightarrow BS \rightarrow BP$
yields a finite covering $BH^\wedge_p\rightarrow BS$  and a classical transfer argument shows then 
that multiplication by $|P|$ on $H^*(BS; \Z^\wedge_p)$ factors through the torsion free module 
$H^*(BH^\wedge_p; \Z^\wedge_p)$.
\end{proof}

\begin{theorem}
\label{thm:padicpcompact}
Let $X$ be a $p$-compact group, let
$M$ be a finite $\F_p[\pi_1 BX]$-module,
and let $N$ be a $\Z^\wedge_p[\pi_1 BX]$-module
which is finitely generated over $\Z^\wedge_p$. Then
\begin{enumerate}
\item the $ \Z^\wedge_p$-algebra $H^*(BX; \Z^\wedge_p)$ is Noetherian;

\item the module $H^*(BX; M)$ is Noetherian over $H^*(BX; \F_p)$;

\item the module $H^*(BX; N)$ is Noetherian over $H^*(BX; \Z^\wedge_p)$.
\end{enumerate}
\end{theorem}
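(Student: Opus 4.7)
The plan is to observe that all the technical work has already been done in earlier sections, and the three statements follow essentially by verifying the hypotheses needed to apply Theorem~\ref{thm:padicnoetherian}, Theorem~\ref{thm:Fpmodule}, and Theorem~\ref{thm:main} respectively. The new input specific to $p$-compact groups is Lemma~\ref{lem:pcgbounded}, together with two basic facts from Dwyer--Wilkerson~\cite{MR1274096}: the space $BX$ is connected with finite fundamental group (in fact $\pi_1 BX$ is a finite $p$-group), and the mod~$p$ cohomology algebra $H^*(BX;\F_p)$ is a finitely generated $\F_p$-algebra, hence Noetherian.

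For part~(1), I would simply combine the Noetherianity of $H^*(BX;\F_p)$ with the bounded torsion statement of Lemma~\ref{lem:pcgbounded} and feed both into Theorem~\ref{thm:padicnoetherian}, whose hypotheses are precisely these two assertions for a connected space with finite fundamental group. The conclusion is that $H^*(BX;\Z^\wedge_p)$ is a Noetherian $\Z^\wedge_p$-algebra.

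For part~(2), since $\pi_1 BX$ is a finite $p$-group and $H^*(BX;\F_p)$ is Noetherian, the hypotheses of Theorem~\ref{thm:Fpmodule} are met; applying it to the finite $\F_p[\pi_1 BX]$-module $M$ yields that $H^*(BX;M)$ is Noetherian over $H^*(BX;\F_p)$. For part~(3), part~(1) supplies the Noetherianity of $H^*(BX;\Z^\wedge_p)$, and $\pi_1 BX$ is a finite $p$-group, so Theorem~\ref{thm:main} applies directly to the module $N$ to give that $H^*(BX;N)$ is Noetherian over $H^*(BX;\Z^\wedge_p)$.

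There is no new obstacle at this stage: the whole point of the preceding sections was to reduce the $p$-compact group statement to the two ingredients of Noetherianity of the mod~$p$ cohomology (provided by Dwyer--Wilkerson) and boundedness of the $p$-torsion (provided by Lemma~\ref{lem:pcgbounded}, where the transfer to a maximal toral subgroup was the genuinely non-formal step).
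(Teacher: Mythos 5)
Your proposal matches the paper's own proof exactly: part (1) combines Dwyer--Wilkerson's Noetherianity of $H^*(BX;\F_p)$ with Lemma~\ref{lem:pcgbounded} to invoke Theorem~\ref{thm:padicnoetherian}, part (2) follows from Theorem~\ref{thm:Fpmodule} using that $\pi_1 BX$ is a finite $p$-group, and part (3) follows from part (1) via Theorem~\ref{thm:main}. No discrepancies.
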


\begin{proof}
The main theorem of Dwyer and Wilkerson, \cite[Theorem~2.4]{MR1274096},
asserts  that $H^*(BX;\F_p)$ is Noetherian. Lemma~\ref{lem:pcgbounded} allows us to apply
Theorem~\ref{thm:padicnoetherian} to prove the first claim. The second claim
follows then from Theorem~\ref{thm:Fpmodule} because $\pi_1 BX$
is a  finite $p$-group,
\cite[Lemma 2.1]{MR1274096}. Finally Theorem~\ref{thm:main} implies the third claim.
\end{proof}

\begin{remark}
\label{rem:Cadek}
{\rm Let us consider the case of $BO(n)$ at the prime $2$ (the
fundamental group is cyclic of order $2$). E.H. Brown made in \cite{MR652459}
an explicit computation of the integral cohomology. He actually proves that
the square of any even Stiefel-Whitney class $w_{2i}^2$ belongs to the
image of $\rho$ and the technique we use in Lemma~\ref{prop:finitetype2} is
somewhat inspired by his computations. Even though the
relations in the mod $p$ cohomology of an arbitrary $p$-compact
group (one which is not $p$-torsion free) make it difficult to
exhibit explicit generators for the $p$-adic cohomology,
Theorem~\ref{thm:padicpcompact} (1) gains qualitative control on it.
\par
As for twisted coefficients, let $\Z^\vee$ be a free abelian group of
rank one, endowed with  the sign action of the fundamental group $C_2$.
In \cite[Theorem~1]{MR1709293} \v{C}adek exhibits an
explicit \emph{finite} set of generators of $H^*(BO(n);
\Z^\vee)$, as a module over $H^*(BO(n); \Z)$. This is one of
the few available explicit computations illustrating our results.}
\end{remark}

\medskip

Broto, Levi, and Oliver defined in \cite{MR1992826} the concept of
$p$-local finite group. It consists of a triple $(S,\mathcal F,
\mathcal L)$ where $S$ is a finite $p$-group and, $\mathcal F$ and
$\mathcal L$ are two categories whose objects are subgroups of $S$.
The category $\mathcal F$ models abstract conjugacy relations among
the subgroups of $S$, and $\mathcal L$ is an extension of $\mathcal
F$ with enough information to define a classifying space $|\mathcal L|^\wedge_p$
which behaves like the $p$-completed classifying space of a finite group.
In fact, to any finite group $G$ corresponds a $p$-local finite group with 
$|\mathcal L|^\wedge_p \simeq (BG)^\wedge_p$, but there are also other ``exotic"
$p$-local finite groups.
\begin{lemma}
\label{lem:plfgbounded}
Let $(S,\mathcal F, \mathcal L)$ be a $p$-local finite group. The
$p$-torsion in $H^*(|\mathcal L|^\wedge_p; \Z^\wedge_p)$
is then bounded.
\end{lemma}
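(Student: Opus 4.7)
The plan is to mimic Lemma~\ref{lem:pcgbounded}, with the Sylow $p$-subgroup $S$ of the fusion system playing the role of the maximal toral subgroup. Let $\theta\colon BS\to |\mathcal L|^\wedge_p$ be the natural map coming from viewing $S$ as an object of $\mathcal L$. I would invoke the theory of characteristic bisets for saturated fusion systems (Broto--Levi--Oliver, \cite{MR1992826}), or equivalently Ragnarsson's stable splitting of $\Sigma^\infty_+ BS$, to produce a stable ``characteristic transfer'' $\tau\colon \Sigma^\infty_+ |\mathcal L|^\wedge_p \to \Sigma^\infty_+ BS$ whose composite $\theta_+ \circ \tau$ is a stable mod~$p$ equivalence of $\Sigma^\infty_+ |\mathcal L|^\wedge_p$; in fact, after $p$-completion this composite exhibits $\Sigma^\infty_+ |\mathcal L|^\wedge_p$ as a retract of $\Sigma^\infty_+ BS$.

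I would then check that Proposition~\ref{padicfinitetype} applies to $|\mathcal L|^\wedge_p$: the fundamental group is a quotient of $S$, hence a finite $p$-group, and $H^n(|\mathcal L|^\wedge_p; \F_p)$ is finite in every degree since, by \cite{MR1992826}, it is the subring of $\mathcal F$-stable elements of the finite-type algebra $H^*(BS; \F_p)$. Thus $H^n(|\mathcal L|^\wedge_p; \Z^\wedge_p)$ is a finitely generated $\Z^\wedge_p$-module in each degree, and the same holds for the cofibre $C$ of $\theta_+ \circ \tau$ via the long exact cohomology sequence. Exactly as in the proof of Lemma~\ref{lem:pcgbounded}, the vanishing of $H^*(C;\F_p)$ combined with the universal coefficient exact sequence~\eqref{universal} and Nakayama's lemma forces $H^*(C;\Z^\wedge_p)=0$. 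Hence $\theta_+ \circ \tau$ induces an isomorphism on $p$-adic cohomology, and $\theta^*\colon H^*(|\mathcal L|^\wedge_p;\Z^\wedge_p)\to H^*(BS;\Z^\wedge_p)$ is (split) injective.

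The final step is to observe that $S$ is a finite $p$-group, so the Evens--Venkov theorem \cite{MR0137742} applied over the Noetherian ring $\Z^\wedge_p$ shows that $H^*(BS;\Z^\wedge_p)$ is a Noetherian graded $\Z^\wedge_p$-algebra. Its $p$-torsion ideal $\mathrm{T}_p H^*(BS;\Z^\wedge_p)$ is then finitely generated; the maximum of the $p$-power orders of a finite generating set bounds the torsion in $H^*(BS;\Z^\wedge_p)$, and the injection $\theta^*$ transports this bound to $H^*(|\mathcal L|^\wedge_p;\Z^\wedge_p)$.

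The one step I expect to be the real obstacle is locating (or setting up) the characteristic transfer $\tau$ in a form suitable for integral coefficients: the known statements about stable elements and characteristic idempotents are typically phrased at the level of mod~$p$ cohomology, and one must check that the splitting in question lives in the $p$-completed stable homotopy category, so that the retract persists on $H^*(-;\Z^\wedge_p)$ and not just on $H^*(-;\F_p)$. Once this is in hand, the remainder of the argument is a direct transcription of Lemma~\ref{lem:pcgbounded}.
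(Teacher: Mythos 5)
Your argument is correct, and its backbone — realising $\Sigma^\infty_+|\mathcal L|^\wedge_p$ as a stable retract of $\Sigma^\infty_+ BS$ and transporting a torsion bound along the resulting split injection $\theta^*$ — is exactly the paper's approach, which cites Broto--Levi--Oliver \cite[p.~815]{MR1992826} and Ragnarsson \cite{MR2199459} for the retract. Two of your intermediate steps can be dropped. First, the obstacle you flag at the end is not actually there: the characteristic idempotent is constructed in the $p$-completed stable homotopy category, so $\Sigma^\infty_+|\mathcal L|^\wedge_p$ is a genuine retract of $\Sigma^\infty_+ BS$ after $p$-completion; in particular $\theta_+\circ\tau$ is the identity up to homotopy, not merely a mod~$p$ equivalence, and the cofibre-plus-Nakayama argument you borrow from Lemma~\ref{lem:pcgbounded} (which there serves precisely to upgrade a mod~$p$ statement to a $p$-adic one) is superfluous. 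Second, you do not need Evens--Venkov for $BS$: since $S$ is a finite group, the classical transfer argument shows $|S|$ annihilates $H^n(BS;\Z^\wedge_p)$ for all $n>0$, so all of the positive-degree $p$-adic cohomology of $BS$ is torsion of exponent dividing $|S|$, and the retract carries this bound directly to $H^*(|\mathcal L|^\wedge_p;\Z^\wedge_p)$. This is the paper's two-line proof.
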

\begin{proof}
In  \cite[p. 815]{MR1992826} Broto, Levi and Oliver show the suspension spectrum  $\Sigma^\infty \left(|\mathcal L|^\wedge_p\right)$ is a retract of  $\Sigma^\infty BS$ following an idea due to Linckelmann and Webb (see also \cite{MR2199459}) . Since the order of $S$ annihilates all cohomology groups of $BS$, the same holds for
$H^*(|\mathcal L|^\wedge_p;\mathbb Z^\wedge_p)$.
\end{proof}
\begin{theorem}
\label{thm:padicplfg}
Let $(S,\mathcal F, \mathcal L)$ be a $p$-local finite group,
let $M$ be a finite $\F_p[\pi_1\left( |\mathcal L|^\wedge_p\right)]$-module, and
let $N$ be a $\Z^\wedge_p[\pi_1\left( |\mathcal L|^\wedge_p\right)]$-module
which is finitely generated over $\Z^\wedge_p$. Then
\begin{enumerate}
\item the $ \Z^\wedge_p$-algebra $H^*(|\mathcal L|^\wedge_p; \Z^\wedge_p)$ is Noetherian;

\item the module $H^*(|\mathcal L|^\wedge_p; M)$ is Noetherian over $H^*(|\mathcal L|^\wedge_p; \F_p)$;

\item the module $H^*(|\mathcal L|^\wedge_p; N)$ is Noetherian over $H^*(|\mathcal L|^\wedge_p; \Z^\wedge_p)$.
\end{enumerate}
\end{theorem}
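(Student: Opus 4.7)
The plan is to imitate verbatim the proof of Theorem~\ref{thm:padicpcompact}, substituting each ingredient about $p$-compact groups by its $p$-local finite group counterpart. Three inputs are required: (a)~Noetherianity of the $\F_p$-algebra $H^*(|\mathcal L|^\wedge_p; \F_p)$, (b)~boundedness of the $p$-torsion in $H^*(|\mathcal L|^\wedge_p; \Z^\wedge_p)$, and (c)~the fact that $\pi_1(|\mathcal L|^\wedge_p)$ is a finite $p$-group.

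For input (a) we invoke the main finite generation result of Broto, Levi, and Oliver in \cite{MR1992826}, which identifies $H^*(|\mathcal L|^\wedge_p; \F_p)$ with the ring of $\mathcal F$-stable elements inside the Noetherian ring $H^*(BS; \F_p)$ and shows that it is a Noetherian $\F_p$-algebra. Input (b) is the content of Lemma~\ref{lem:plfgbounded}. For input (c), the natural map $BS \to |\mathcal L|^\wedge_p$ is surjective on fundamental groups by the construction of the classifying space in \cite{MR1992826}, so $\pi_1(|\mathcal L|^\wedge_p)$ is a quotient of the finite $p$-group $S$, and is therefore itself a finite $p$-group.

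With these three inputs in hand, the proof assembles as follows. Combining (a), (b) and (c), Theorem~\ref{thm:padicnoetherian} yields at once that $H^*(|\mathcal L|^\wedge_p; \Z^\wedge_p)$ is a Noetherian $\Z^\wedge_p$-algebra, which is claim~(1). Claim~(2) then follows by applying Theorem~\ref{thm:Fpmodule} to the connected space $Y = |\mathcal L|^\wedge_p$, its finite $p$-group fundamental group, the finite $\F_p[\pi_1 Y]$-module $M$, and the Noetherian algebra $H^*(Y; \F_p)$. Claim~(3) is similarly obtained from Theorem~\ref{thm:main}, using the already proved claim~(1) as its hypothesis and the fact that $N$ is finitely generated over $\Z^\wedge_p$.

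I do not expect any genuine obstacle here, since the argument is purely a formal assembly of results established earlier in the paper. The only point that required non-trivial work, namely the bounded torsion statement (b), has already been handled in Lemma~\ref{lem:plfgbounded} via the Linckelmann--Webb stable splitting $\Sigma^\infty(|\mathcal L|^\wedge_p)$ retracting off $\Sigma^\infty BS$. The remaining care needed is merely to check that the hypotheses of Theorems~\ref{thm:padicnoetherian}, \ref{thm:Fpmodule} and \ref{thm:main} apply, which is immediate from (a)--(c).
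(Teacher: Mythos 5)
Your proposal matches the paper's own proof: both collect the same three ingredients (Noetherianity of the mod~$p$ cohomology via the Broto--Levi--Oliver stable elements theorem, bounded torsion from Lemma~\ref{lem:plfgbounded}, and finiteness of $\pi_1(|\mathcal L|^\wedge_p)$ as a $p$-group) and feed them into Theorems~\ref{thm:padicnoetherian}, \ref{thm:Fpmodule} and~\ref{thm:main} exactly as in Theorem~\ref{thm:padicpcompact}. The only cosmetic difference is that you sketch why $\pi_1(|\mathcal L|^\wedge_p)$ is a finite $p$-group instead of citing \cite[Proposition~1.12]{MR1992826} directly, which is the cleaner reference.
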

\begin{proof}
We follow the same steps we took for $p$-compact groups in Theorem~\ref{thm:padicpcompact}.
The first ingredient is the stable elements theorem \cite[Theorem~B]{MR1992826}, which  also shows that $H^*(|\mathcal L|^\wedge_p; \F_p)$ is Noetherian. We just proved that the torsion in  $H^*(|\mathcal L|^\wedge_p; \Z^\wedge_p)$ is bounded.
Moreover, the fundamental group of $|\mathcal L|^\wedge_p$ is a finite $p$-group by \cite[Proposition~1.12]{MR1992826}.
\end{proof}

\appendix
\section{}
\label{appendix:Noether}
This short appendix deals with Noetherianity in the graded case over
the $p$-adics. We start however with a more general result, probably
well-known to the experts: the graded Eakin-Nagata Theorem. The
non-graded version can be found for example in Matsumura's book
\cite[Theorem~3.7(i)]{MR1011461}.

\begin{proposition}
\label{gradedEakinNagata}
Let $A^*$ be a graded subring of $B^*$. Assume that $B^*$ is
Noetherian as a ring and finitely generated as an $A^*$-module. Then
$A^*$ is also a Noetherian ring.
\end{proposition}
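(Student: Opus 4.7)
The plan is to reduce this graded statement to the classical (ungraded) Eakin--Nagata theorem, Matsumura's Theorem~3.7(i) in \cite{MR1011461}. The hypotheses of the classical result concern only the underlying ring and module data, making no reference to any grading, and the conclusion ``$A^*$ is a Noetherian ring'' is exactly the desired statement, since for a graded ring the ACC on all ideals is equivalent to the ACC on homogeneous ideals.

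First I would forget the gradings and regard $A^* \subseteq B^*$ as an inclusion of plain commutative rings. The hypothesis that $B^*$ is Noetherian as a ring is then immediate. For the finite generation of $B^*$ as an ungraded $A^*$-module, one picks homogeneous generators $b_1, \ldots, b_n$ of $B^*$ as a graded $A^*$-module, which exist by assumption; the same $b_i$ generate $B^*$ as an $A^*$-module in the ungraded sense.

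Then I would apply the classical Eakin--Nagata theorem directly to conclude that $A^*$ is Noetherian as a ring.

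The main obstacle I anticipate is the distinction between commutative and graded-commutative rings: Matsumura's theorem is stated for commutative rings in the strict sense, whereas cohomology rings are only graded-commutative. This should be handled by noting that the standard proof of Eakin--Nagata (for example Formanek's, which applies Zorn's lemma to the ideals $I \subset A$ with $A/I$ not Noetherian and then shows that such a maximal $I$ must be prime) invokes commutativity only in ways that pass through to the graded-commutative setting unchanged. As a backup, one may always reduce to the subring of even-degree elements, which is strictly commutative and over which $A^*$ is finitely generated.
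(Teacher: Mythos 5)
Your argument is correct, and it takes a genuinely different route from the paper's. The paper does not forget the grading: it uses Matsumura's Theorem~13.1 to see that $B^0$ is Noetherian and $B^*$ is a finitely generated $B^0$-algebra (hence a finitely generated $A^0$-algebra), applies the classical Eakin--Nagata theorem only to the inclusion $A^0\subset B^0$ of degree-zero subrings, then invokes the Artin--Tate lemma (Atiyah--Macdonald, Proposition~7.8) for $A^0\subset A^*\subset B^*$ to conclude that $A^*$ is a finitely generated $A^0$-algebra, and finishes with Matsumura~13.1 again. Your route, forgetting the grading and applying the ungraded Eakin--Nagata directly to $A^*\subset B^*$, is shorter. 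What the paper's detour buys is that its sole appeal to Eakin--Nagata happens for the honestly commutative degree-zero subrings, so the graded-commutativity issue you correctly flag never enters that step (though the paper still tacitly uses graded-commutative versions of Matsumura~13.1 and Artin--Tate). Your primary fix --- that Formanek's proof of Eakin--Nagata goes through unchanged in the graded-commutative setting --- is the right one. Your backup via the even-degree subring, however, is shaky as stated: without already knowing that $A^*$ is Noetherian, there is no reason for $A^*$ to be module-finite over $A^{\mathrm{ev}}$, nor for $B^{\mathrm{ev}}$ to be module-finite over $A^{\mathrm{ev}}$, so that reduction does not obviously close.
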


\begin{proof}
By \cite[Theorem 13.1]{MR1011461}, $B^0$ is Noetherian and $B^*$ is
a finitely generated $B^0$-algebra. Moreover, $B^0$ is a finitely
generated $A^0$-module and therefore $B^*$ is a finitely generated
$A^0$-algebra. Also, $A^0$ is Noetherian by the classical
Eakin-Nagata theorem \cite[Theorem~3.7(i)]{MR1011461}. Applying
\cite[Proposition 7.8]{MR0242802} to the inclusions $A^0\subset
A^*\subset B^*$ we obtain that $A^*$ is a finitely generated
$A^0$-algebra. Again, by \cite[Theorem 13.1]{MR1011461}, $A^*$ is a
Noetherian ring.
\end{proof}

The following technical proposition  allows us to deduce Noetherianity over the $p$-adics from
the Noetherianity of the mod $p$ reduction.

\begin{proposition}
\label{prop:Hausdorff}
Let $A^*$ be a graded $\Z^\wedge_p$-algebra such
that in each degree $A^k$ is complete for the $p$-adic topology. Let
$N^*$ be a graded $A^*$-module such that for all $k$, $N^k$ is
Hausdorff for the $p$-adic topology. If $N^* \otimes \F_p$ is
a Noetherian $A^*$-module, then so is $N^*$.
\end{proposition}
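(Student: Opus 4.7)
My plan is to adapt the classical Nakayama-style argument for $\mathfrak{m}$-adically complete rings to the graded $p$-adic setting. I proceed in two stages: first I show $N^*$ is finitely generated over $A^*$, and then I argue for the ascending chain condition on submodules.

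For finite generation, pick homogeneous elements $e_1, \ldots, e_r \in N^*$ whose images generate the Noetherian $A^*$-module $N^*/pN^*$. Given any $x \in N^k$, an iterated lifting produces homogeneous $a_i^{(j)} \in A^{k-|e_i|}$ and $x^{(j)} \in N^k$ satisfying $x^{(0)} = x$ and $x^{(j)} = \sum_i a_i^{(j)} e_i + p\,x^{(j+1)}$, so that after $m$ steps $x = \sum_{j<m} p^j \bigl(\sum_i a_i^{(j)} e_i\bigr) + p^m x^{(m)}$. Completeness of each $A^{k-|e_i|}$ assembles the geometric series $a_i := \sum_{j\ge 0} p^j a_i^{(j)}$ into an honest element of $A^{k-|e_i|}$, and Hausdorffness of $N^k$ forces $x - \sum_i a_i e_i \in \bigcap_m p^m N^k = 0$. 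Hence $e_1, \ldots, e_r$ generate $N^*$ over $A^*$.

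For the ascending chain condition, I first observe by induction on $k$ that each $N^*/p^k N^*$ is a Noetherian $A^*$-module. The short exact sequence $0 \to p^{k-1}N^*/p^k N^* \to N^*/p^k N^* \to N^*/p^{k-1}N^* \to 0$ reduces this to the fact that $p^{k-1}N^*/p^k N^*$ is a quotient of the Noetherian module $N^*/pN^*$ via multiplication by $p^{k-1}$. Thus given any chain $M_1 \subseteq M_2 \subseteq \cdots$ of graded submodules of $N^*$, for every $k$ the image chain $\{(M_i + p^kN^*)/p^kN^*\}_i$ stabilizes at some index $n_k$, yielding a nondecreasing sequence of indices. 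I then promote this mod-$p^k$ stabilization across all $k$ to honest stabilization of $\{M_i\}$ using the same completeness-plus-Hausdorffness trick as in Stage~1 to glue coherent lifts of the generators of $(M_i + p^kN^*)/p^kN^*$ into finitely many genuine generators of $\bigcup_i M_i$ living inside that union.

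The main obstacle will be precisely this passage from mod-every-$p^k$ stabilization to honest stabilization, because a submodule of $N^*$ need not be $p$-adically closed; a priori $\bigcap_k(M_{n_k} + p^kN^*)$ is only the $p$-adic closure of $\bigcup_k M_{n_k}$, and not the union itself. The delicate point is to exploit the Noetherianity of $N^*/p^kN^*$ for every $k$ (not merely $k = 1$) to keep the approximations aligned, and then to convert the approximations into an honest finite generating set by taking advantage of the $p$-adic convergence of coefficient series inside the complete ring $A^k$.
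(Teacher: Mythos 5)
Your Stage~1 is exactly the paper's proof: the authors choose homogeneous lifts $\nu_1,\dots,\nu_t$ of generators of $N^*\otimes\F_p$, run the same successive\hbox{-}approximation/geometric\hbox{-}series argument using completeness of $A^*$ and Hausdorffness of $N^*$, conclude that the $\nu_i$ generate $N^*$ over $A^*$, and then \emph{stop}. The ascending chain condition for arbitrary graded submodules of $N^*$ --- your Stage~2 --- is not addressed in the paper's proof at all.

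So the gap you flag is genuine, and it is shared with the paper. You have also correctly located where the naive argument fails: for a submodule $M^*\subseteq N^*$ one cannot simply rerun Stage~1, because $M^*\otimes\F_p\to N^*\otimes\F_p$ need not be injective (equivalently, $p^kM^*$ is strictly finer than the induced filtration $M^*\cap p^kN^*$), and the union of a chain of submodules need not be $p$-adically closed, so the $\bmod\, p^k$ stabilization does not descend to honest stabilization. In the paper's only application of this proposition (Lemma~\ref{lem:torsionfreemodule}) the ring $A^*=H^*(Y;\Z^\wedge_p)$ is assumed Noetherian, so finite generation already yields Noetherianity there; this presumably explains the paper's silence on Stage~2.

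To actually prove the proposition as stated, the route via $N^*/p^kN^*$ that you sketch is not the right one; instead, equip each submodule $M^*\subseteq N^*$ with the \emph{induced} filtration $M^*\cap p^kN^*$. Then $\operatorname{gr}(M^*)$ embeds into $\operatorname{gr}(N^*)=\bigoplus_k p^kN^*/p^{k+1}N^*$, and one checks that $\operatorname{gr}(N^*)$ is a Noetherian $\operatorname{gr}(A^*)$-module: its $\operatorname{gr}(A^*)$-action factors through $R[T]$ with $R=(A^*/pA^*)/\operatorname{Ann}(N^*/pN^*)$, which is Noetherian because a commutative ring with a faithful Noetherian module is Noetherian; $R[T]$ is Noetherian by the Hilbert basis theorem; and $\operatorname{gr}(N^*)$ is generated over $R[T]$ by its degree-zero piece $N^*/pN^*$. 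Hence $\operatorname{gr}(M^*)$ is finitely generated, $M^*$ is Hausdorff for the induced filtration since $\bigcap_k(M^*\cap p^kN^*)=0$, and your Stage~1 argument (run with the induced filtration, which is exactly Atiyah--Macdonald's graded criterion, their 10.24--10.25) shows that $M^*$ is finitely generated. This closes the gap without any Noetherian hypothesis on $A^*$.
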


\begin{proof}
Let us choose homogeneous elements $\nu_1,...,\nu_t \in N^*$ such that $\nu_1
\otimes 1,...,\nu_t \otimes 1$ generate $N^* \otimes \F_p$ as
$A^*$-module. We claim that $\nu_1,...,\nu_t$ generate $N^*$ as an
$A^*$-module. Given $n \in N^*$ we may write
$n \otimes 1 = \sum a_i^0 (\nu_i \otimes 1)$ for some $a_i^0 \in A^*$.
Define $n_0 =\sum a_i^0 \nu_i$ and notice that $n -n_0 \in p N^*$.
Thus, there exists an element $m_1 \in N^*$, homogeneous of degree $\leq \deg n$,
such that $n-n_0 = p m_1$.
We iterate the procedure and find elements $a_i^1 \in A^*$ such that
$m_1 \otimes 1 = \sum a_i^1 (\nu_i \otimes 1)$.
We define $n_1 = n_0 + p \sum a_i^1 \nu_i = \sum
(a_i^0 + p a_i^1) \nu_i$. In this way we construct, for any $i$,
Cauchy sequences of coefficients $(a_i^0 + p a_i^1 + \dots + p^k
a_i^k)_k$ in $A^*$. By completeness this sequence converges to some
$a_i \in A^*$. Since $N^*$ is Hausdorff, the element $\sum a_i \nu_i$
is equal to $n$.
\end{proof}
In the following corollary, the assumption that $A^*$ be connected, i.e. $A^0 = \Z^\wedge_p$,
is important. 
\begin{corollary}
\label{cor:Hausdorff-algebra}
Let $A^*$ be a graded connected Hausdorff $\Z^\wedge_p$-algebra.
If $A^*\otimes \F_p$ is a Noetherian $\F_p$-algebra,
then $A^*$ is a Noetherian $\Z^\wedge_p$-algebra.
\end{corollary}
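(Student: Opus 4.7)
The plan is to produce a finitely generated $\Z^\wedge_p$-subalgebra $B^*\subseteq A^*$ such that $A^*$ itself is finitely generated as a $B^*$-module; Noetherianity of $A^*$ will then follow by two applications of the Hilbert basis theorem. The bridge that takes us from a statement about $A^*\otimes\F_p$ to one about $A^*$ is Proposition~\ref{prop:Hausdorff}.

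First, using the connectedness hypothesis together with \cite[Theorem~13.1]{MR1011461}, the Noetherian $\F_p$-algebra $A^*\otimes\F_p$ (whose degree-zero part is $\F_p$) is finitely generated, and I can choose homogeneous generators $\bar a_1,\dots,\bar a_n$ of \emph{strictly positive} degree, which I lift to homogeneous elements $a_1,\dots,a_n\in A^*$. Let $B^*$ be the graded $\Z^\wedge_p$-subalgebra of $A^*$ generated by these lifts. Then $B^*$ is Noetherian by Hilbert's basis theorem, and because every $a_i$ has positive degree, only finitely many monomials in the $a_i$ lie in any given total degree; hence each $B^k$ is a finitely generated $\Z^\wedge_p$-module, and in particular is $p$-adically complete.

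I then apply Proposition~\ref{prop:Hausdorff} with $B^*$ playing the role of the algebra and $A^*$ viewed as a $B^*$-module playing the role of $N^*$: each $B^k$ is complete, each $A^k$ is Hausdorff by assumption, and the induced map $B^*\otimes\F_p\to A^*\otimes\F_p$ is surjective by the choice of generators, so $A^*\otimes\F_p$ is a cyclic, hence Noetherian, $B^*$-module. The proposition then produces finitely many elements of $A^*$ generating it as a $B^*$-module, and therefore in particular as a $B^*$-algebra. A final application of Hilbert's basis theorem to this finitely generated $B^*$-algebra over the Noetherian ring $B^*$ yields the desired conclusion that $A^*$ is a Noetherian $\Z^\wedge_p$-algebra.

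The one subtle point is securing strictly positive degrees for the generators $a_i$, which uses the connectedness hypothesis $A^0=\Z^\wedge_p$ in an essential way: without it, each $B^k$ could fail to be finitely generated over $\Z^\wedge_p$ and the completeness input driving Proposition~\ref{prop:Hausdorff} would break down. Everything else is routine, and no results beyond those already in the paper are required.
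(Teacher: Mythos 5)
Your proof is correct and follows a genuinely different route from the paper's, although both hinge on the same Cauchy-sequence mechanism from Proposition~\ref{prop:Hausdorff}. The paper lifts homogeneous generators $\gamma_1,\dots,\gamma_n$ of $A^*\otimes\F_p$ to $A^*$ and then works degree by degree: for each $k$ it treats $A^k$ as a Hausdorff module over the complete ring $\Z^\wedge_p$, and reruns the \emph{proof} (not the statement) of Proposition~\ref{prop:Hausdorff} to show that the finitely many monomials of degree $k$ in the $\gamma_i$ generate $A^k$ over $\Z^\wedge_p$; this exhibits $A^*$ as a finitely generated $\Z^\wedge_p$-algebra and finishes via \cite[Theorem~13.1]{MR1011461}. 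You instead package the lifts into a genuine graded subalgebra $B^*\subseteq A^*$, observe that each $B^k$ is a finitely generated $\Z^\wedge_p$-module hence complete, and then invoke Proposition~\ref{prop:Hausdorff} \emph{as a black box} with the pair $(B^*,A^*)$ to conclude that $A^*$ is a finitely generated $B^*$-module; two applications of the Hilbert basis theorem close the argument. The trade-off is clear: the paper avoids the auxiliary object $B^*$ but has to reach inside the proof of the proposition, whereas your version uses the proposition only through its stated hypotheses and conclusion, at the cost of the extra bookkeeping around $B^*$ (including the explicit positivity of the generators' degrees to ensure each $B^k$ is finitely generated). Both are sound; yours is arguably the more modular formulation, and the final step (ideals of $A^*$ are $B^*$-submodules, so Noetherianity of $A^*$ as $B^*$-module already gives the ACC on ideals of $A^*$) could even replace the last Hilbert-basis appeal with a one-line observation.
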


\begin{proof}
Since $\Z^\wedge_p$ is Noetherian and $A^*$ is connected, $A^*$ is a Noetherian
$\Z^\wedge_p$-algebra if and only if $A^*$ is a finitely
generated $\Z^\wedge_p$-algebra, \cite[Theorem 13.1]{MR1011461}. Note that $A^* \otimes
\F_p$ is also a Noetherian $\Z^\wedge_p$-algebra via
the mod $p$ reduction $\Z^\wedge_p\to \F_p$. Let us
choose homogeneous elements $\gamma_1,...,\gamma_n \in A^*$ such that $\gamma_1
\otimes 1,...,\gamma_n \otimes 1$ generate $A^* \otimes \F_p$
as a $\Z^\wedge_p$-algebra. For a fixed $k \geq 0$, $A^k \otimes \F_p$ is
generated as a $\Z^\wedge_p$-module by the monomials
$(\gamma_1\otimes 1)^{e_1}\cdots (\gamma_n\otimes 1)^{e_n}$ with
$\sum_{i=1}^n|\gamma_i|e_i= k$. Since $A^k$ is a Hausdorff
$\Z^\wedge_p$-module, the proof of Proposition \ref{prop:Hausdorff}
shows that $A^k$ is generated by the monomials
$\gamma_1^{e_1}\cdots \gamma_n^{e_n}$ with
$\sum_{i=1}^n|\gamma_i|e_i= k$.
This shows that $A^*$ is generated as a
$\Z^\wedge_p$-algebra by the elements $\gamma_1,...,\gamma_n
\in A^*$ and therefore $A^*$ is a Noetherian $\Z^\wedge_p$-algebra.
\end{proof}


\bibliographystyle{amsplain}
\providecommand{\bysame}{\leavevmode\hbox to3em{\hrulefill}\thinspace}
\providecommand{\MR}{\relax\ifhmode\unskip\space\fi MR }
\providecommand{\MRhref}[2]{%
  \href{http://www.ams.org/mathscinet-getitem?mr=#1}{#2}
}
\providecommand{\href}[2]{#2}


\bigskip
{\small
\begin{minipage}[t]{8 cm}
Kasper K. S. Andersen\\ 
Centre for Mathematical Sciences\\
LTH, Box 118\\
SE-22100 Lund, Sweden\\
\textit{E-mail:}\texttt{\,kksa@maths.lth.se}\\
\\
Vincent Franjou\\
Laboratoire Jean-Leray - UMR 6629\\
2, rue de la Houssini\`ere\\
BP 92208\\
F-44322 Nantes cedex 3, France\\
\textit{E-mail:}\texttt{\,Vincent.Franjou@univ-nantes.fr}\\
\\
J\'er\^ome Scherer\\
EPFL SB MATHGEOM\\
MA B3 455\\
Station 8\\
CH -1015 Lausanne, Switzerland\\
\textit{E-mail:}\texttt{\,jerome.scherer@epfl.ch}
\end{minipage}
\begin{minipage}[t]{8 cm}
Nat\`{a}lia Castellana\\ 
Departament de Matem\`atiques\\
Universitat Aut\`onoma de Barcelona\\ 
E-08193 Bellaterra, Spain\\
\textit{E-mail:}\texttt{\,natalia@mat.uab.cat}\\
\\
Alain Jeanneret\\
Mathematisches Institut\\
Universit\"at Bern\\
Sidlerstrasse 5\\
CH-3012 Bern, Switzerland\\
\textit{E-mail:}\texttt{\,alain.jeanneret@math.unibe.ch}\\
\end{minipage}

\end{document}